\definecolor{mygray}{gray}{0.85}
\renewcommand{\leq}{\leqslant}
\renewcommand{\geq}{\geqslant}
\def\subsection{\@startsection{subsection}{3}%
  \z@{.5\linespacing\@plus.7\linespacing}{.3\linespacing}%
  {\bfseries\centering}}
\def\subsubsection{\@startsection{subsubsection}{3}%
  \z@{.5\linespacing\@plus.7\linespacing}{.3\linespacing}%
  {\centering}}
\def\myfnt{\ifx\protect\@typeset@protect\expandafter\footnote\else\expandafter\@gobble\fi}
\newtheorem{theorem}{Theorem}
\newtheorem{corollary}[theorem]{Corollary}
\newtheorem{definition}[theorem]{Definition}
\newtheorem{lemma}[theorem]{Lemma}
\newtheorem{proposition}[theorem]{Proposition}
\newtheorem{problem}[theorem]{Problem}
\newtheorem{observation}[theorem]{Observation}
\newtheorem{fact}[theorem]{Fact}
\newtheorem{convention}[theorem]{Convention}
\newtheorem{notation}[theorem]{Notation}
\newcounter{claimcounter}
\numberwithin{claimcounter}{theorem}
\newenvironment{claim}{\stepcounter{claimcounter}{\noindent {\underline{\em Claim \theclaimcounter}.}}}{}
\newenvironment{claimproof}[1]{\noindent{{\em Proof.}}\space#1}{\hfill $\rule{0.40em}{0.40em}$}
\begin{document}

\begin{abstract} We give a complete characterization of the graph products of cyclic groups admitting a Polish group topology, and show that they are all realizable as the group of automorphisms of a countable structure. In particular, we characterize the right-angled Coxeter groups (resp. Artin groups) admitting a Polish group topology. This generalizes results from \cite{shelah}, \cite{shelah_1} and  \cite{paolini&shelah}.
\end{abstract}

\title{Polish Topologies for Graph Products of Cyclic Groups}
\thanks{Partially supported by European Research Council grant 338821. No. 1115 on Shelah's publication list.}

\author{Gianluca Paolini}
\address{Einstein Institute of Mathematics,  The Hebrew University of Jerusalem, Israel}

\author{Saharon Shelah}
\address{Einstein Institute of Mathematics,  The Hebrew University of Jerusalem, Israel \and Department of Mathematics,  Rutgers University, U.S.A.}

\date{\today}
\maketitle


\section{Introduction}

	\begin{definition}\label{def_cyclic_prod} Let $\Gamma = (V, E)$ be a graph and $\mathfrak{p}: V \rightarrow \{ p^n : p \text{ prime and } 1 \leq n \} \cup \{ \infty \}$ a graph colouring. We define a group $G(\Gamma, \mathfrak{p})$ with the following presentation:
	$$ \langle V \mid a^{\mathfrak{p}(a)} = 1, \; bc = cb : \mathfrak{p}(a) \neq \infty \text{ and }  b E c \rangle.$$
\end{definition}
We call the group $G(\Gamma, \mathfrak{p})$ the {\em $\Gamma$-product\footnote{Notice that this is consistent with the general definition of graph products of groups from \cite{green}. In fact every graph product of cyclic groups can be represented as $G(\Gamma, \mathfrak{p})$ for some $\Gamma$ and $\mathfrak{p}$ as above.} of the cyclic groups} $\{ C_{\mathfrak{p}(v)} : v \in \Gamma \}$, or simply the {\em graph product of} $(\Gamma, \mathfrak{p})$. The groups $G(\Gamma, \mathfrak{p})$ where $\mathfrak{p}$ is constant of value $\infty$ (resp. of value $2$) are known as {\em right-angled Artin groups} $A(\Gamma)$ (resp. {\em right-angled Coxeter groups} $C(\Gamma)$). These groups have received much attention in combinatorial and geometric group theory. In the present paper we tackle the following problem:

	\begin{problem}\label{problem} Characterize the graph products of cyclic groups admitting a Polish group topology, and which among these are realizable as the group of automorphisms of a countable structure.
\end{problem}

	This problem is motivated by the work of Shelah \cite{shelah} and Solecki \cite{solecki}, who showed that no uncountable Polish group can be free or free abelian (notice that for $\Gamma$ discrete (resp. complete) $A(\Gamma)$ is a free group (resp. a free abelian group)). These negative results have been later generalized by the authors to the class of uncountable right-angled Artin groups \cite{paolini&shelah}. In this paper we give a complete solution to Problem \ref{problem} proving the following theorem:

	\begin{theorem}\label{main_th} Let $G = G(\Gamma, \mathfrak{p})$, and recall that $\mathfrak{p}$ is a graph colouring (cf. Definition \ref{def_cyclic_prod}), and so we refer to the elements in the range of $\mathfrak{p}$ as colors. Then $G$ admits a Polish group topology if only if $(\Gamma, \mathfrak{p})$ satisfies the following four conditions:
	\begin{enumerate}[(a)]
	\item there exists a countable $A \subseteq \Gamma$ such that for every $a \in \Gamma$ and $a \neq b \in \Gamma - A$, $a$ is adjacent to $b$;
	\item there are only finitely many colors $c$ such that the set of vertices of color $c$ is uncountable;
	\item there are only countably many vertices of color $\infty$;
	\item if there are uncountably many vertices of color $c$, then the set of vertices of color $c$ has the size of the continuum.
\end{enumerate}
Furthermore, if $(\Gamma, \mathfrak{p})$ satisfies conditions (a)-(d) above, then $G$ can be realized as the group of automorphisms of a countable structure.
\end{theorem}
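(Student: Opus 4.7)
I proceed in two parts.

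\emph{Sufficiency: $(a)$--$(d) \Rightarrow G \cong \mathrm{Aut}(M)$ for some countable $M$.} The plan is to decompose $G$ abstractly as a direct product of groups each realizable as an automorphism group of a countable structure. First, enlarge $A$ to include all vertices of color $\infty$ (countable by (c)) and all vertices lying in color classes of countable size (countable in total, since the set of colors itself is countable). The enlarged $A$ is still countable. By (a), $V \setminus A$ is a clique, and by (b) and (d) it decomposes as $V_1 \sqcup \cdots \sqcup V_k$ of finitely many monochromatic sets, each of size $2^{\aleph_0}$ and of prime-power color $p_i^{n_i}$. Since by (a) each vertex in $V \setminus A$ is adjacent to every other vertex of $\Gamma$, every generator in $V \setminus A$ commutes with every other generator of $G$, so the standard graph-product subgroup identification yields the abstract isomorphism
\[
G \;\cong\; G_A \times \prod_{i=1}^{k} C_{p_i^{n_i}}^{(V_i)},
\]
where $G_A := G(\Gamma|_A, \mathfrak{p}|_A)$ is a countable graph product and each factor $C_{p_i^{n_i}}^{(V_i)}$ is the direct sum of $2^{\aleph_0}$ copies of a finite cyclic group.

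The essential abstract identity is $C_{p^n}^{(2^{\aleph_0})} \cong C_{p^n}^{\omega}$, verified by Ulm invariants: both sides are $p^n$-bounded abelian with $U_n = 2^{\aleph_0}$ and $U_k = 0$ for $k<n$, so Ulm's theorem gives the isomorphism. Each factor then realizes as an automorphism group of a countable structure: for $G_A$, take $M_A := G_A$ equipped with the unary operations of left multiplication by each group element, so $\mathrm{Aut}(M_A)$ is the group of right translations, isomorphic to $G_A$; for each $C_{p_i^{n_i}}^{\omega}$, take $M_i$ to be countably many disjoint, rigidly labeled, cyclically ordered $p_i^{n_i}$-element orbits. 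The disjoint union of these structures, with unary predicates distinguishing the pieces, has automorphism group equal to the direct product above, hence isomorphic to $G$.

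\emph{Necessity.} The inputs are that no uncountable Polish group is free (\cite{shelah}) or free abelian (\cite{solecki}), together with the extensions to right-angled Artin groups of \cite{paolini&shelah}. For (a) I argue contrapositively: failure of (a) permits a greedy transfinite construction of an uncountable antichain of pairwise non-adjacent vertices in $\Gamma$, whose induced subgroup in $G$ is an uncountable free product of nontrivial cyclic groups, violating the cited results. For (c), uncountably many color-$\infty$ vertices produce either an uncountable antichain (yielding an uncountable free subgroup) or, after invoking (a), an uncountable clique of $\infty$-colored vertices (yielding an uncountable free-abelian subgroup), both forbidden. For (d), the subgroup generated by the vertices of a fixed color is analytic in the Polish topology, and the perfect-set theorem for analytic sets forces its cardinality, which equals the size of the vertex class, to be countable or $2^{\aleph_0}$.

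Condition (b) is the subtlest step and is where I expect to devote the most effort. Assume (a) and (c) are already established, and let $K \leq G$ be the (abelian, torsion) subgroup generated by $V\setminus A$. Enumerate the hypothetical infinitely many uncountable-support colors $c_1, c_2, \ldots$, set $N_n := c_1 \cdots c_n$, and define closed subgroups $F_n := \{g \in \overline{K} : g^{N_n} = 1\}$. These increase in $n$, and each has index at least $2^{\aleph_0}$ in $\overline{K}$ (it misses the direct-sum summands indexed by $i>n$, which contribute $2^{\aleph_0}$ many cosets), so no $F_n$ is open in $\overline{K}$. If the exhaustion $\overline{K} = \bigcup_n F_n$ holds, the Baire category theorem applied to the Polish group $\overline{K}$ forces some $F_n$ to have non-empty interior, hence to be open, a contradiction. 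The delicate point, and the main technical obstacle, is justifying this exhaustion: closures of torsion subgroups in a Polish group can a priori introduce elements of unbounded or infinite order (as the example $\overline{\mathbb{Q}/\mathbb{Z}} = \mathbb{R}/\mathbb{Z}$ shows in miniature). Overcoming this will require either a direct proof that the closure remains of bounded torsion type for our specific $K$, or passage to a suitable Borel quotient or subgroup of $G$ on which the exhaustion is genuine.
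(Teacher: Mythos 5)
Your sufficiency argument is essentially the paper's: the same decomposition $G \cong G_A \oplus \bigoplus_i C_{p_i^{n_i}}^{(2^{\aleph_0})}$, the same identification $C_{p^n}^{(2^{\aleph_0})} \cong C_{p^n}^{\omega}$, and the same realization as $\mathrm{Aut}$ of a disjoint union of countably many rigid-up-to-$C_{p^n}$ pieces. That half is fine. The necessity half, however, is where all the content of the theorem lives, and there your proposal does not go through.

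For (a) and (c): the failure of (a) does \emph{not} yield an uncountable set of pairwise non-adjacent vertices. It only yields either uncountably many disjoint non-adjacent pairs $\{a_i,b_i\}$ or a single $a_*$ non-adjacent to uncountably many $b_i$ (consider a complete graph minus an uncountable perfect matching: (a) fails but there is no anticlique of size $3$). Worse, even if you had such an anticlique, an uncountable free (or free abelian, or RAAG) \emph{subgroup} of a Polish group is not forbidden --- $S_\infty$ contains free subgroups of size continuum; the theorems of Shelah, Solecki and \cite{paolini&shelah} are about the whole group, so "violating the cited results" is a non sequitur. The paper's actual argument is of a different nature: it uses the special property of Polish groups (Fact \ref{771_fact}, from \cite{shelah_1}) that systems of division equations $x_n = (x_{n+1})^{p}h_n^{-1}$ are solvable whenever the $h_n$ converge to $e$ fast enough relative to a continuity sequence; one then chooses the $h_n$ to be products like $b_{i(n)}^{-1}b_{j(n)}a_{i(n)}^{-1}a_{j(n)}$ with non-adjacent letters, pushes the solution through the retraction $\mathbf{p}_A$ of Fact \ref{fact}, and uses the normal-form analysis of $sp(g^p)$ (Proposition \ref{fact_word}) to show the resulting element would need a $p$-th root that Proposition \ref{prop_for_first_nec} forbids. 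Nothing in your proposal supplies a substitute for this mechanism, and the same mechanism (Lemmas \ref{lemma2} and \ref{lemma3}) is what actually proves (b) and (c). For (d), the set of vertices of a given color is just an abstract generating set; it is not analytic, or even Borel, with respect to an arbitrary Polish group topology on $G$, so the perfect set theorem does not apply to it; the paper instead builds continuum many elements $y_A$ as limits of Cauchy sums $\sum_{k\in A}(x_{i_k}-x_{j_k})$ and counts classes of an analytic divisibility relation (Lemma \ref{lemma4} and Observation \ref{lemma5}). Finally, for (b) you have correctly located the hole in your own Baire-category argument: the exhaustion $\overline{K}=\bigcup_n F_n$ is precisely what you cannot justify, since the closure of a torsion subgroup need not be torsion of the required bounded type; the paper avoids closures entirely and again runs the division-equation argument, projecting onto the summand $G_n$ to obtain $(h_n^{-1}g_n)^{\prod_{i<n}k(i)}=e$ against the choice of orders. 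In short, the sufficiency direction is correct and standard, but every branch of your necessity argument either rests on a false reduction or on an acknowledged unproved claim, and the key idea --- solvability of infinite division systems in Polish groups combined with retractions and normal forms --- is missing.
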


	Thus, the only graph products of cyclic groups admitting a Polish group topology are the direct sums $G_1 \oplus G_2$ with $G_1$ a countable graph product of cyclic groups and $G_2$ a direct sum of finitely many continuum sized vector spaces over a finite field. From our general result we deduce a solution to Problem \ref{problem} in the particular case of right-angled Artin groups (already proved in \cite{paolini&shelah}) \mbox{and right-angled Coxeter groups.}

	\begin{corollary} No uncountable Polish group can be a right-angled Artin group.
\end{corollary}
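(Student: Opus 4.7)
The plan is to derive the corollary directly from Theorem \ref{main_th} by observing that the defining feature of a right-angled Artin group $A(\Gamma)$ is that every generator has infinite order, i.e.\ in the presentation $G(\Gamma, \mathfrak{p})$ the colouring $\mathfrak{p}$ is identically $\infty$. So I would take an uncountable Polish group $G$ that is presumed isomorphic to $A(\Gamma)$ for some graph $\Gamma$, write $G = G(\Gamma, \mathfrak{p})$ with $\mathfrak{p} \equiv \infty$, and aim for a contradiction with condition (c) of the main theorem, which forces only countably many vertices of colour $\infty$.

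The one small intermediate step I would need is that $|\Gamma|$ must be uncountable. This is immediate: $A(\Gamma)$ is generated (as a group) by $V(\Gamma)$, so if $V(\Gamma)$ were countable then $A(\Gamma)$ would itself be countable, contradicting the hypothesis that $G$ is uncountable. Hence $V(\Gamma)$ is uncountable and every vertex has colour $\infty$, giving uncountably many vertices of colour $\infty$, which contradicts clause (c) of Theorem \ref{main_th}. There is no real obstacle here; the corollary is a one-line specialisation of the main theorem obtained by noting that clause (c) is the binding constraint for the constant-$\infty$ colouring.
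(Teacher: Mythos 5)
Your argument is correct and is exactly the intended deduction: the paper leaves this corollary as an immediate consequence of Theorem \ref{main_th}, and specialising to the constant-$\infty$ colouring, noting that an uncountable $A(\Gamma)$ forces $\Gamma$ uncountable, and invoking clause (c) is precisely the route. No issues.
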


\begin{corollary} An uncountable right-angled Coxeter group $C(\Gamma)$  admits a Polish group topology if and only if it is realizable as the group of automorphisms of a countable structure if and only if $|\Gamma| = 2^{\omega}$ and there exists  a countable $A \subseteq \Gamma$ such that for every $a \in \Gamma$ and $a \neq b \in \Gamma - A$, $a$ is adjacent to $b$.
\end{corollary}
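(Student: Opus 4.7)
The plan is to deduce the corollary as a direct specialisation of Theorem~\ref{main_th} to the constant colouring $\mathfrak{p}\equiv 2$. First I would observe that a right-angled Coxeter group $C(\Gamma)$ is exactly $G(\Gamma,\mathfrak{p})$ with $\mathfrak{p}(v)=2$ for every $v\in\Gamma$, so that Theorem~\ref{main_th} applies, and in particular the ``Polish topology'' and ``automorphism group of a countable structure'' conditions are \emph{equivalent} already at the level of the theorem, which immediately reduces the corollary to a single characterisation in terms of (a)--(d).

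Next I would check what conditions (b), (c), (d) of Theorem~\ref{main_th} say for the constant colouring $\mathfrak{p}\equiv 2$. Since the range of $\mathfrak{p}$ is $\{2\}$, only one colour occurs, so (b) is automatic; no vertex has colour $\infty$, so (c) is automatic; and (d) degenerates to the statement that \emph{if} the (unique) colour class of vertices of colour $2$ is uncountable, \emph{then} it has cardinality $2^{\omega}$, i.e.\ either $|\Gamma|\leq\aleph_0$ or $|\Gamma|=2^{\omega}$.

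I would then use the elementary fact that $|C(\Gamma)|=\max(|\Gamma|,\aleph_0)$ (since $C(\Gamma)$ is generated by $\Gamma$ with each generator of order $2$, so the group has cardinality bounded by the set of finite words over $\Gamma$), so that $C(\Gamma)$ is uncountable if and only if $\Gamma$ is uncountable. Under the standing hypothesis that $C(\Gamma)$ is uncountable, the disjunction ``$|\Gamma|\leq\aleph_0$ or $|\Gamma|=2^{\omega}$'' from (d) therefore collapses to $|\Gamma|=2^{\omega}$, while condition (a) is exactly the combinatorial condition stated in the corollary.

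Putting this together, Theorem~\ref{main_th} applied to $\mathfrak{p}\equiv 2$ gives: $C(\Gamma)$ admits a Polish group topology $\iff$ it is the automorphism group of a countable structure $\iff$ (a) holds and $|\Gamma|=2^{\omega}$, which is the corollary. There is no real obstacle here beyond this bookkeeping; the only point that needs a brief mention is the cardinality computation $|C(\Gamma)|=\max(|\Gamma|,\aleph_0)$, which one can justify either by the normal form for graph products of cyclic groups or by the trivial upper bound via finite words in $\Gamma$.
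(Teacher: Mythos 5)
Your proposal is correct and matches the paper's (implicit) derivation: the corollary is obtained exactly by specializing Theorem~\ref{main_th} to the constant colouring $\mathfrak{p}\equiv 2$, noting that (b) and (c) are vacuous, that (d) reduces to $|\Gamma|=2^{\omega}$ once $C(\Gamma)$ (equivalently $\Gamma$) is uncountable, and that the three-way equivalence follows since automorphism groups of countable structures are Polish. No discrepancies to report.
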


	In works in preparation we deal with the characterization problem faced here in the more general setting of graph products of general groups \cite{paolini&shelah2}, and with questions of embeddability of graph products of groups into Polish groups \cite{paolini&shelah1}.

\section{Preliminaries}


	We will make a crucial use of the following special case of \cite[3.1]{shelah_1}.
	
	\begin{notation} By a group term $\sigma(\bar{x})$ we mean a word in the alphabet $\{ x : x \in \bar{x} \}$, i.e. an expression of the form $x_{1}^{\varepsilon_{1}} \cdots x_{n}^{\varepsilon_{n}}$, 
where $x_1,..., x_n$ are from $\bar{x}$ and each $\varepsilon_i$ is either $1$ or $-1$. The number $n$ is known as the length of the group term $\sigma(\bar{x})$.
\end{notation}
	
	\begin{fact}[\cite{shelah_1}]\label{771_fact} Let $G = (G, \mathfrak{d})$ be a Polish group and $\bar{g} = (\bar{g}_n : n < \omega)$, with $\bar{g}_n \in G^{\ell(n)}$ and $\ell(n) < \omega$.
	\begin{enumerate}[(1)]
	\item For every non-decreasing $f \in \omega^\omega$ with $f(n) \geq 1$ and $(\varepsilon_n)_{n < \omega} \in (0, 1)^{\omega}_{\mathbb{R}}$ there is a sequence $(\zeta_n)_{n < \omega}$ (which we call an $f$-continuity sequence for $(G, \mathfrak{d}, \bar{g})$, or simply an $f$-continuity sequence) satisfying the following conditions:
	\begin{enumerate}[(A)]
	\item for every $n < \omega$:
	\begin{enumerate}[(a)]
	\item $\zeta_n \in (0, 1)_{\mathbb{R}}$ and $\zeta_n < \varepsilon_n$;
	\item $\zeta_{n+1} < \zeta_{n}/2$;
	\end{enumerate}
	\end{enumerate}
	\begin{enumerate}[(B)]
	\item for every $n < \omega$, group term $\sigma(x_0, ..., x_{m-1}, \bar{y}_n)$ and $(h_{(\ell, 1)})_{\ell < m}, (h_{(\ell, 2)})_{\ell < m} \in G^m$, the $\mathfrak{d}$-distance from $\sigma(h_{(0, 1)}, ..., h_{(m-1, 1)}, \bar{g}_n)$ to $\sigma(h_{(0, 2)}, ..., h_{(m-1, 2)}, \bar{g}_n)$ is $< \zeta_n$, when: 
	\begin{enumerate}[(a)]
	\item $m \leq n+1$;
	\item $\sigma(x_0, ..., x_{m-1}, \bar{y}_n)$ has length $\leq f(n)+1$;
	\item $h_{(\ell, 1)}, h_{(\ell, 2)} \in Ball(e; \zeta_{n+1})$;
	\item $G \models \sigma(e, ..., e, \bar{g}_n) = e$.
	\end{enumerate}
	\end{enumerate}
	\item The set of equations $\Gamma = \{ x_n = (x_{n+1})^{k(n)} d_{n} : n < \omega \}$ is solvable in $G$ when for every $n < \omega$:
	\begin{enumerate}[(a)] 
	\item $f \in \omega^\omega$ is non-decreasing and $f(n) \geq 1$;
	\item $1 \leq k(n) < f(n)$;
	\item $(\zeta_n)_{n < \omega}$ is an $f$-continuity sequence;
	\item $\mathfrak{d}(d_{n}, e) < \zeta_{n+1}$.
	\end{enumerate}
	\end{enumerate}
\end{fact}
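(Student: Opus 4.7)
The plan is to treat the two parts in order: part (1) constructs the continuity scaffolding that part (2) consumes in an inverse-limit argument.

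For part (1), I would build $\zeta_n$ by recursion on $n$. At stage $n+1$ the task is to enforce clauses (A)(a)--(b) and to make clause (B) hold at index $n$. The crucial observation is that only finitely many group terms $\sigma(x_0,\ldots,x_{m-1},\bar{y}_n)$ meet the arity and length bounds in (B)(a)--(b), and of those only finitely many satisfy the identity $\sigma(e,\ldots,e,\bar{g}_n)=e$ of (B)(d). For each such $\sigma$, continuity of multiplication and inversion makes the evaluation map $(h_0,\ldots,h_{m-1}) \mapsto \sigma(h_0,\ldots,h_{m-1},\bar{g}_n)$ a continuous map $G^m \to G$ that vanishes at the origin; hence there is a radius $\delta_\sigma > 0$ on which its image lies in $\mathrm{Ball}(e;\zeta_n/2)$. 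Choosing $\zeta_{n+1}$ below $\zeta_n/2$, below $\varepsilon_{n+1}$, and below $\min_\sigma \delta_\sigma$ discharges every constraint; the displayed distance bound in (B) then follows from the triangle inequality.

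For part (2), I would construct the solution by backward truncation. For each $N$ set $a_N^{(N)}=e$ and $a_n^{(N)} = (a_{n+1}^{(N)})^{k(n)} d_n$ for $n<N$, and aim to show that for each fixed $n$ the sequence $(a_n^{(N)})_{N \geq n}$ is $\mathfrak{d}$-Cauchy. A preliminary downward induction on $n$, using (B) applied to the word $\sigma(u,v) = u^{k(n)} v$ of length $k(n)+1 \leq f(n)+1$ (which vanishes at $(e,e)$), confirms that $a_n^{(N)} \in \mathrm{Ball}(e;\zeta_n)$, with the base case fed by $\mathfrak{d}(d_j,e) < \zeta_{j+1}$. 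The Cauchy property is then extracted by applying (B) at an appropriate level to a comparison word $\tau$ realizing the difference $a_n^{(N+1)}(a_n^{(N)})^{-1} = (a_{n+1}^{(N+1)})^{k(n)} (a_{n+1}^{(N)})^{-k(n)}$ algebraically; the inequality $k(n) < f(n)$ ensures there is enough length budget to design such $\tau$. Setting $x_n = \lim_N a_n^{(N)}$ and passing the recurrence to the limit by continuity yields the solution.

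The principal obstacle is the latter step: designing the comparison word $\tau$ so that it simultaneously satisfies $\tau(e,\ldots,e,\bar{g}_n)=e$, has length within the budget provided by $f$, and captures the desired algebraic difference. A subsidiary subtlety is that a compatible metric on a Polish group is typically only left-invariant, not bi-invariant, so one cannot simply translate bounds across the right-multiplications by $d_n$ that arise along the recursion; the $d_n$ factors must be absorbed into $\tau$ from the start. Once these words are in place, the rapid decay $\zeta_{n+1} < \zeta_n/2$ from part (1) makes the successive differences $\mathfrak{d}(a_n^{(N+1)}, a_n^{(N)})$ decay summably across $N$, the inverse limit converges for every $n$, and the defining equations pass to the limit by continuity of the group operations.
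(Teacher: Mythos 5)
First, a point of order: the paper does not prove this statement at all --- it is quoted verbatim as a special case of \cite[3.1]{shelah_1} and used as a black box, so there is no in-paper proof to compare your attempt against. Judged on its own terms, your argument for part (1) is fine and is surely close to the source: there are only finitely many admissible terms at each stage, each evaluation map is continuous and sends the origin to $e$ by clause (B)(d), and the recursion lets you shrink $\zeta_{n+1}$ after $\zeta_n$ is fixed.

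Part (2), however, has a genuine gap, and it sits exactly where you flag ``the principal obstacle'' and then assert the conclusion anyway. Whatever comparison word $\tau$ you apply clause (B) to at level $n$, the conclusion of (B) is a bound of $\zeta_n$ --- a quantity depending only on $n$, not on the truncation index $N$. So the estimate you can actually extract is $\mathfrak{d}\bigl(a_n^{(N)}, a_n^{(N+1)}\bigr) < \zeta_n$ \emph{uniformly in} $N$, which says nothing about convergence of $(a_n^{(N)})_{N}$ for fixed $n$; the decay $\zeta_{n+1} < \zeta_n/2$ is decay in the equation index $n$, not in $N$, and cannot by itself make the successive differences ``decay summably across $N$.'' To get a bound that improves with $N$ one must trace the discrepancy back to its source at level $N$ (where $a_N^{(N)} = e$ while $a_N^{(N+1)} = d_N \in Ball(e;\zeta_{N+1})$) and control its propagation through the $N-n$ recursion steps down to level $n$; doing this within the constraints of (B) --- where the $d_i$ occurring along the way are only in the large balls $Ball(e;\zeta_{i+1})$, the term length is capped by $f(\cdot)+1$, and the number of free slots is capped by $n+1$ --- is precisely the technical content of \cite[3.1]{shelah_1} that your sketch does not supply. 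Two further concrete problems: the word $\tau(u,v) = u^{k(n)}v^{-k(n)}$ you propose has length $2k(n)$, which in general exceeds the budget $f(n)+1$ (already for $k(n)=f(n)-1$ and $f(n)>3$); and converting a bound on $\mathfrak{d}\bigl(a_n^{(N+1)}(a_n^{(N)})^{-1}, e\bigr)$ into one on $\mathfrak{d}\bigl(a_n^{(N+1)}, a_n^{(N)}\bigr)$ requires right-invariance of $\mathfrak{d}$, which a compatible complete metric on a Polish group need not have (indeed it need not even be left-invariant --- a left-invariant compatible metric exists by Birkhoff--Kakutani but may fail to be complete). The overall architecture (truncated solutions, Cauchy, pass to the limit) is the right one, but the central estimate is missing.
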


	\begin{convention}\label{convention} If we apply Fact \ref{771_fact}(1) without mentioning $\bar{g}$ it means that we apply Fact \ref{771_fact}(1) for $\bar{g}_n = \emptyset$, for every $n < \omega$.
\end{convention}


We shall use the following observation freely throughout the paper.

	\begin{observation}\label{observation_prelim} Suppose that $(G, \mathfrak{d})$ is Polish, $A \subseteq G$ is uncountable and $\zeta > 0$. Then for some $g_1 \neq g_2 \in A$ we have $\mathfrak{d}((g_1)^{-1}g_2, e) < \zeta$. 
\end{observation}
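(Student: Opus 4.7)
The plan is a Lindelöf-plus-continuity pigeonhole argument. First, since $(G,\mathfrak{d})$ is a Polish group, the map $\varphi : G \times G \to G$ defined by $\varphi(x,y) = x^{-1}y$ is continuous, and $\varphi(a,a) = e$ for every $a \in G$. Continuity of $\varphi$ at each diagonal point $(a,a)$ therefore yields, for every $a \in G$, a radius $\delta_a > 0$ such that whenever $x,y \in \mathrm{Ball}(a,\delta_a)$ one has $\mathfrak{d}(x^{-1}y, e) < \zeta$. Note that it is essential to invoke continuity at $(a,a)$, not at $(e,e)$: this is what gives a bound that is uniform across pairs of points lying together in a single ball.

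Second, the Polish space $(G,\mathfrak{d})$ is separable and metrizable, hence second-countable, hence Lindelöf; moreover second-countability is inherited by subspaces, so $A$ (in the subspace topology) is Lindelöf as well. Consequently the open cover $\{\mathrm{Ball}(a,\delta_a)\cap A : a\in A\}$ of $A$ admits a countable subcover, indexed by some sequence $(a_n)_{n<\omega}$ in $A$.

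Finally, since $A$ is uncountable and is covered by the countably many balls $\mathrm{Ball}(a_n,\delta_{a_n})$, the pigeonhole principle furnishes an index $n$ and distinct points $g_1 \neq g_2 \in A \cap \mathrm{Ball}(a_n,\delta_{a_n})$. By the choice of $\delta_{a_n}$ we obtain $\mathfrak{d}(g_1^{-1}g_2, e) < \zeta$, as required. No step poses a real obstacle; the only subtlety is the one noted above, that the continuity is applied at each diagonal point of $G \times G$ so as to package a whole ball of pairs $(x,y)$ into the single inequality $\mathfrak{d}(x^{-1}y,e) < \zeta$.
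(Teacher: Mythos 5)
Your proof is correct and follows essentially the same route as the paper's: both use separability of $(G,\mathfrak{d})$ to produce two distinct elements of $A$ lying close together, and continuity of $(x,y)\mapsto x^{-1}y$ at a diagonal point of $G\times G$ to convert that closeness into $\mathfrak{d}((g_1)^{-1}g_2,e)<\zeta$. The paper obtains the close pair by taking $g_1\in A$ to be an accumulation point of $A$ (which exists by separability) instead of your Lindel\"of-plus-pigeonhole covering, but this is only a cosmetic difference.
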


	\begin{proof} First of all, notice that we can find $g_1 \in A$ such that $g_1$ is an accumulation point of $A$, because otherwise we contradict the separability of $(G, \mathfrak{d})$. Furthermore, the function $(x, y) \mapsto x^{-1}y$ is continuous and so for every $(x_1, y_1) \in G^2$ and $\zeta > 0$ there is $\delta > 0$ such that, for every $(x_2, y_2) \in G^2$, if $\mathfrak{d}(x_1, x_2), \mathfrak{d}(y_1, y_2) < \delta$, then $\mathfrak{d}((x_1)^{-1}y_1, (x_2)^{-1}y_2) < \zeta$. Let now $g_2 \in Ball(g_1; \delta) \cap A - \{g_1\}$, then $\mathfrak{d}((g_1)^{-1}g_2, (g_1)^{-1}g_1) = \mathfrak{d}((g_1)^{-1}g_2, e) < \zeta$, and so we are done.
\end{proof}

	Before proving Lemma \ref{lemma1} we need some preliminary work. Given $A \subseteq \Gamma$ we denote the induced subgraph of $\Gamma$ on vertex set $A$ as $\Gamma_A$.

	\begin{fact}\label{fact} Let $G = G(\Gamma, \mathfrak{p})$, $A \subseteq \Gamma$ and $G_A = (\Gamma_A, \mathfrak{p} \restriction A)$. Then there exists a unique homomorphism $\mathbf{p} = \mathbf{p}_A: G \rightarrow G_A$ such that $\mathbf{p}(c) = c$ if $c \in A$, and $\mathbf{p}(c) = e$ if $c \notin A$.
\end{fact}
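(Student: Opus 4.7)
The plan is to invoke the universal property of the presentation in Definition \ref{def_cyclic_prod}. Recall that $G = G(\Gamma,\mathfrak{p})$ is, by definition, the group generated by $V$ subject to the relations $a^{\mathfrak{p}(a)} = 1$ (for $\mathfrak{p}(a) \neq \infty$) and $[b,c] = 1$ (for $bEc$). Hence any set-map from $V$ into a group $H$ that respects these relations extends uniquely to a homomorphism $G \to H$. Uniqueness of $\mathbf{p}_A$ is then automatic because $V$ generates $G$, so I only need to exhibit the map on generators and verify the relations.

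Define $\varphi \colon V \to G_A := G(\Gamma_A, \mathfrak{p}\restriction A)$ by $\varphi(c) = c$ if $c \in A$ and $\varphi(c) = e$ if $c \notin A$. I then check the two families of defining relators:
\begin{enumerate}[(i)]
\item If $a \in V$ with $\mathfrak{p}(a) \neq \infty$, then either $a \in A$, in which case $\varphi(a)^{\mathfrak{p}(a)} = a^{\mathfrak{p}(a)} = e$ is a defining relation of $G_A$ (since $(\mathfrak{p}\restriction A)(a) = \mathfrak{p}(a)$), or $a \notin A$, in which case $\varphi(a)^{\mathfrak{p}(a)} = e^{\mathfrak{p}(a)} = e$ trivially.
\item If $bEc$ in $\Gamma$, then either both $b,c \in A$, so $bEc$ also holds in $\Gamma_A$ and $\varphi(b)\varphi(c) = bc = cb = \varphi(c)\varphi(b)$ is a defining relation of $G_A$, or at least one of $\varphi(b),\varphi(c)$ equals $e$, and commutation is automatic.
\end{enumerate}
In both cases the relator is sent to $e$ in $G_A$, so by the universal property $\varphi$ extends (uniquely) to a homomorphism $\mathbf{p}_A : G \to G_A$ with the stated values on generators.

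There is no real obstacle here: the argument is just the usual extension-by-universal-property trick, and the only thing one needs to take a little care about is that when $a \in A$ the defining relation in $G_A$ uses the color $(\mathfrak{p}\restriction A)(a)$, which by definition equals $\mathfrak{p}(a)$; and that when $b,c \in A$ the edge relation $bEc$ in $\Gamma$ restricts to the corresponding edge in $\Gamma_A$. Once these two trivialities are noted, the verification is immediate.
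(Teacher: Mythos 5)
Your proof is correct and takes essentially the same route as the paper: both arguments define $\mathbf{p}$ on generators and use the universal property of the presentation, checking that every defining relation of $G$ is sent either to a defining relation of $G_A$ or to a trivial one (the paper organizes this check by partitioning the relation set into three groups rather than by cases on the generators, but the content is identical). Uniqueness follows in both from the fact that $V$ generates $G$.
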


	\begin{proof} For arbitrary $G = G(\Gamma, \mathfrak{p})$, let $\Omega_{(\Gamma, \mathfrak{p})}$ be the set of equations from Definition \ref{def_cyclic_prod} defining $G(\Gamma, \mathfrak{p})$. Then for the $\Omega_{(\Gamma, \mathfrak{p})}$ of the statement of the fact we have $\Omega_{(\Gamma, \mathfrak{p})} = \Omega_1 \cup \Omega_2 \cup \Omega_3$, where:
	\begin{enumerate}[(a)]
	\item $\Omega_1 = \Omega_{(\Gamma_A, \mathfrak{p} \restriction A)}$;
	\item $\Omega_2 = \Omega_{(\Gamma_{\Gamma - A}, \mathfrak{p} \restriction \Gamma - A})$;
	\item $\Omega_3 = \{ bc = cb : b E_{\Gamma} c \text{ and } \{ b, c \} \not\subseteq A \}$.
\end{enumerate}
Notice now that $\mathbf{p}$ maps each equation in $\Omega_1$ to itself and each equation in $\Omega_2 \cup \Omega_3$ to a trivial equation, and so $p$ is an homomorphism (clearly unique).
\end{proof}

	\begin{definition} Let $(\Gamma, \mathfrak{p})$ be as usual and $G = G(\Gamma, \mathfrak{p})$.
	\begin{enumerate}[(1)]
	\item A word $w$ in the alphabet $\Gamma$ is a sequence $(a_1^{\alpha_1}, ..., a_k^{\alpha_k})$, with $a_1 \neq a_2 \neq \cdots \neq a_k \in \Gamma$ and $\alpha_1, ..., \alpha_k \in \mathbb{Z} - \{0 \}$.
	\item We denote words simply as $a_1^{\alpha_1} \cdots a_k^{\alpha_k}$ instead of $(a_1^{\alpha_1}, ..., a_k^{\alpha_k})$.
	\item We call each $a_i^{\alpha_i}$ a syllable of the word $a_1^{\alpha_1} \cdots a_k^{\alpha_k}$.
	\item We say that the word $a_1^{\alpha_1} \cdots a_k^{\alpha_k}$ spells the element $g \in G$ if $ G \models g = a_1^{\alpha_1} \cdots a_k^{\alpha_k}$.
	\item We say that the word $w$ is reduced if there is no word with fewer syllables which spells the same element of $G$.
	\item We say that the consecutive syllables $a_i^{\alpha_i}$ and $a_{i+1}^{\alpha_{i+1}}$ are adjacent if $a_iE_{\Gamma}a_{i+1}$.
	\item We say that the word $w$ is a normal form for $g$ if it spells $g$ and it is reduced.
	\item We say that two normal forms are equivalent if there they spell the same element $g \in G$.
\end{enumerate}
\end{definition}

	As usual, when useful we identify words with the elements they spell.
	
	\begin{fact}[{\cite[Lemmas 2.2 and 2.3]{gut}}]\label{fact_word_1} Let $G = G(\Gamma, \mathfrak{p})$.
	\begin{enumerate}[(1)] 
	\item If the word $a_1^{\alpha_1} \cdots a_k^{\alpha_k}$ spelling the element $g \in G$ is not reduced, then there exist $1 \leq p < q \leq k$ such that $a_p = a_q$ and $a_p$ is adjacent to each vertex $a_{p + 1}, a_{p + 2}, ..., a_{q-1}$.
	\item If $w_1 = a_1^{\alpha_1} \cdots a_k^{\alpha_k}$ and $w_2 = b_1^{\beta_1} \cdots b_k^{\beta_k}$ are normal forms for $g \in G$, then $w_1$ can be transformed into $w_2$ by repetedly swapping the order of adjacent syllables.
\end{enumerate} 
\end{fact}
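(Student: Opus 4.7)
The plan is to deduce both statements from a single normal form theorem proved via a confluence argument on a rewriting system on words. Let $\sim$ denote the equivalence relation on words generated by the shuffle move $a_i^{\alpha_i} a_{i+1}^{\alpha_{i+1}} \leftrightarrow a_{i+1}^{\alpha_{i+1}} a_i^{\alpha_i}$ when $a_i E_\Gamma a_{i+1}$. Consider the rewriting relation $\to$ which, beyond shuffles, permits two reductions: amalgamating two adjacent syllables of the same letter $a^\alpha a^\beta$ into $a^{\alpha+\beta}$ (deleting the syllable if the exponent becomes $0$), and reducing any exponent $\alpha$ of $a^\alpha$ modulo $\mathfrak{p}(a)$ when $\mathfrak{p}(a) \neq \infty$. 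Each move preserves the element spelled, by the defining relations of Definition \ref{def_cyclic_prod}, and every non-shuffle move strictly decreases either the syllable count or the sum of absolute values of exponents, so $\to$ is Noetherian.

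The key technical step is to establish that $\to$ is confluent modulo $\sim$: if $w \to^* w_1$ and $w \to^* w_2$, there exist $w_1', w_2'$ with $w_1 \to^* w_1'$, $w_2 \to^* w_2'$, and $w_1' \sim w_2'$. By the standard local-confluence reduction, this amounts to analyzing overlapping reductions. The only genuinely nontrivial overlap is when two different amalgamations both involve a common $a$-syllable $a^\alpha$ that has been moved into adjacency with two different $a$-syllables via shuffles. In every such case, the shuffle sequences witness that all intermediate syllables commute with $a$, so the three $a$-syllables in question can be brought together and the two reduction paths yield $\sim$-equivalent outcomes.

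Once termination and confluence modulo $\sim$ are in hand, each $g \in G$ admits a unique $\sim$-class of irreducible words spelling it. Part $(2)$ is then immediate: any two normal forms for the same $g$ lie in the same $\sim$-class, hence are related by shuffles of adjacent syllables. Part $(1)$ follows by tracing the first non-shuffle move applied in a reduction of a non-reduced word $w$: it must amalgamate two syllables $a_p^{\alpha_p}$ and $a_q^{\alpha_q}$ with $a_p = a_q$ that were brought together by shuffles, which forces every intermediate vertex $a_{p+1}, \ldots, a_{q-1}$ to be adjacent to $a_p$ in $\Gamma$. The main obstacle is the confluence-modulo-$\sim$ verification, which demands a careful inventory of overlap patterns and relies essentially on the fact that adjacency to $a$ is a property of individual vertices and not of syllable occurrences; the remainder of the argument is formal.
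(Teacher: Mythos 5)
First, a point of comparison: the paper does not prove this statement at all --- it is imported verbatim as Fact~\ref{fact_word_1} from Lemmas~2.2 and~2.3 of the cited work of Gutierrez, Piggott and Ruane, so there is no in-paper argument to measure you against. Your outline is the standard route to the normal form theorem for graph products (a rewriting system with shuffles, amalgamations and exponent reductions, terminating modulo shuffles and confluent modulo shuffles), and it is the right skeleton. However, as written it is a plan rather than a proof: the step you label ``the main obstacle'' --- confluence modulo $\sim$ --- is precisely where the entire mathematical content of the theorem lives, and you assert rather than verify it. The inventory of critical pairs is not limited to two amalgamations sharing an $a$-syllable: you must also treat an amalgamation whose result has exponent $\equiv 0 \pmod{\mathfrak{p}(a)}$ (the deleted syllable makes previously separated syllables word-adjacent, enabling new reductions), the interaction of amalgamation with exponent reduction on the same syllable, and overlaps mediated by nontrivial shuffle sequences. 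Deferring all of this leaves the proposal without its load-bearing argument.

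Two further points need repair even granting confluence. First, your termination claim is phrased for $\to$ including shuffles, which are reversible and hence admit infinite rewriting sequences; what you need (and what your measure actually gives) is termination \emph{modulo} $\sim$, i.e.\ no infinite sequence containing infinitely many non-shuffle moves. Second, your derivation of part~(1) from ``the first non-shuffle move'' is not quite right: with the paper's definition of word (exponents in $\mathbb{Z}-\{0\}$, not reduced modulo $\mathfrak{p}(a_i)$), the first non-shuffle move applied to a non-reduced word may be an exponent reduction rather than an amalgamation, and an exponent reduction that merely shrinks $|\alpha_i|$ does not decrease the syllable count and so does not witness non-reducedness. You need to locate the first \emph{syllable-count-decreasing} move, and separately handle the degenerate case where that move is the deletion of a single syllable $a_i^{\alpha_i}$ with $\mathfrak{p}(a_i)\mid\alpha_i$, for which the conclusion of part~(1) produces no pair $p<q$ at all; this forces you to either adopt the convention (implicit in the cited source) that syllables are nontrivial elements of the vertex groups, or to argue that such a syllable can be ignored.
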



	\begin{definition} Let $g \in G(\Gamma, \mathfrak{p})$. We define:
	\begin{enumerate}[(1)]
	\item $sp(g) = \{ a_i \in \Gamma : a_1^{\alpha_1} \cdots a_i^{\alpha_i} \cdots a_k^{\alpha_k} \text{ is a normal form for } g \}$;
	\item $F(g) = \{ a_1^{\alpha_1} : a_1^{\alpha_1} \cdots a_k^{\alpha_k} \text{ is a normal form for } g\}$;
	\item $L(g) = \{ a_k^{\alpha_k} : a_1^{\alpha_1} \cdots a_k^{\alpha_k} \text{ is a normal form for } g\}$;
	\item $\hat{L}(g) = \{ a_k^{-\alpha_k}:  a_k^{\alpha_k} \in L(g) \}$.
\end{enumerate}
\end{definition}

\begin{definition} We say that the normal form $a_1^{\alpha_1} \cdots a_k^{\alpha_k}$ is cyclically normal if either $k = 1$ or there is no equivalent normal form $b_1^{\beta_1} \cdots b_k^{\beta_k}$ with $b_1 = b_k$.
\end{definition}

\begin{observation}
\begin{enumerate}[(1)]
\item Notice that if $g \in G(\Gamma, G_a)$ is spelled by a cyclically normal form, then any of the normal forms spelling $g$ are cyclically normal.
\item We say that the group element $g \in G(\Gamma, G_a)$ is cyclically normal if any of the normal forms (which are words) spelling $g$ are cyclically normal.
\end{enumerate}
\end{observation}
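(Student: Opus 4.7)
The plan is to observe that the property of being ``cyclically normal'', although formally defined on a normal form, refers only to the collection of normal forms equivalent to the given one (that is, all normal forms spelling the same element $g$), and is therefore really an invariant of the element $g$ itself.

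First I would invoke Fact \ref{fact_word_1}(2): any two normal forms spelling the same $g$ are obtained from one another by repeatedly swapping adjacent syllables, so in particular they have the same number of syllables $k$. Thus if one normal form of $g$ has $k = 1$ then so do all of them, in which case every normal form of $g$ is cyclically normal by definition.

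Otherwise $k \geq 2$, and both a given normal form $w_1$ of $g$ and any other normal form $w_2$ of $g$ have as their set of equivalent normal forms exactly the set of all normal forms spelling $g$. The clause ``there is no equivalent normal form $b_1^{\beta_1} \cdots b_k^{\beta_k}$ with $b_1 = b_k$'' therefore either holds for both $w_1$ and $w_2$ or for neither. In particular, if $w_1$ is cyclically normal then so is $w_2$, which yields (1).

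No step here looks like a genuine obstacle; the whole content is unpacking the definition in light of Fact \ref{fact_word_1}(2). Item (2) is not a claim but a definition, made legitimate by (1): since (1) guarantees that either every normal form of $g$ is cyclically normal or none is, it is unambiguous to transfer the attribute from normal forms to the element $g$ itself.
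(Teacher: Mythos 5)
Your argument is correct: the paper states this Observation without proof, treating it as immediate from the definitions, and your unpacking (the cyclically-normal condition quantifies over all normal forms spelling $g$, hence is an invariant of $g$; the $k=1$ case is uniform since all normal forms of $g$ have the same syllable count) is exactly the intended justification. Nothing further is needed.
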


	\begin{notation}\label{notation} Given a sequence of words $w_1, ..., w_k$ with some of them possibly empty, we say that the word $w_1 \cdots w_k$ is a normal form (resp. a cyclically normal form) if after deleting the empty words the resulting word is a normal form (resp. a cyclically normal form). 
\end{notation}
	
	Recall that given $A \subseteq \Gamma$ we denote the induced subgraph of $\Gamma$ on vertex set $A$ as $\Gamma_A$.

	\begin{fact}[{\cite[Corollary 24]{bark}}]\label{bark_fact} Any element $g \in G(\Gamma, \mathfrak{p})$ can be written in the form $w_1 w_2 w_3 w'_2 w^{-1}_1$, where:
	\begin{enumerate}[(1)]
	\item $w_1 w_2 w_3 w'_2 w^{-1}_1$ is a normal form;
	\item $w_3 w'_2 w_2$ is cyclically normal;
	\item $sp(w_2) = sp(w'_2)$;
	\item if $w_2 \neq e$, then $\Gamma_{sp(w_2)}$ is a complete graph;
	\item $F(w_2) \cap \hat{L}(w'_2) = \emptyset$.
	\end{enumerate}
\end{fact}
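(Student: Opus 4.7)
The natural strategy is induction on the length $\ell(g)$ of a normal form of $g$, which is a well-defined invariant by Fact \ref{fact_word_1}(2). The base case $\ell(g) \leq 1$ is immediate: set $w_1 = w_2 = w'_2 = e$ and $w_3 = g$; a word of length at most $1$ is automatically cyclically normal, and conditions (3)--(5) hold vacuously. For the inductive step, if $g$ is already cyclically normal the same trivial decomposition works, so assume it is not; by the definition, there is an equivalent normal form $b_1^{\beta_1} \cdots b_k^{\beta_k}$ of $g$ with $b_1 = b_k$.

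Conjugating by $b_1^{\beta_1}$ produces
\[ h := b_1^{-\beta_1} g b_1^{\beta_1} = b_2^{\beta_2} \cdots b_{k-1}^{\beta_{k-1}} \, b_1^{\beta_1 + \beta_k}, \]
which by a routine application of Fact \ref{fact_word_1}(1) admits a normal form of length at most $k-1$ (dropping to $k-2$ when $\beta_1 + \beta_k$ vanishes modulo the order of $b_1$). The induction hypothesis applied to $h$ yields a decomposition $h = \tilde{w}_1 w_2 w_3 w'_2 \tilde{w}_1^{-1}$ satisfying (1)--(5); the natural candidate is then $w_1 := b_1^{\beta_1} \tilde{w}_1$, which automatically inherits conditions (2)--(5) from the inductive decomposition of $h$.

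The real work lies in restoring condition (1): the outer concatenation $b_1^{\beta_1} \tilde{w}_1 w_2 w_3 w'_2 \tilde{w}_1^{-1} b_1^{-\beta_1}$ need not be a normal form as written, because the new outer syllable $b_1^{\beta_1}$ may combine with something inside via the adjacency relations. Using Fact \ref{fact_word_1}(2) (any two normal forms differ by swaps of adjacent syllables), I would split cases on where $b_1^{\beta_1}$ is ultimately absorbed: if it annihilates a syllable of $\tilde{w}_1$ one actually has a shorter $w_1$ already and the bookkeeping trivializes; if it can be commuted past $\tilde{w}_1$ into the clique supporting $w_2$, it must be absorbed \emph{symmetrically} into both $w_2$ and $w'_2$ to preserve $sp(w_2) = sp(w'_2)$; if it stays outside that clique, the written concatenation becomes a normal form after rearrangement.

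The main obstacle is the clique-absorption case, where adjusting $w_2$ and $w'_2$ may violate condition (5) $F(w_2) \cap \hat{L}(w'_2) = \emptyset$; I would repair this by cancelling matched pairs inside the clique, a well-founded operation since each cancellation strictly reduces the syllable count within $sp(w_2)$. Condition (2), that $w_3 w'_2 w_2$ remains cyclically normal after repair, then follows from condition (4): the support of the abelian middle is a complete subgraph, so the relevant syllables commute freely and the boundary of $w_3$ is undisturbed. Once this clique-bookkeeping is organized, the induction closes cleanly.
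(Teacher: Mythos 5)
The paper does not actually prove this statement: it is imported wholesale as \cite[Corollary 24]{bark}, so there is no in-paper argument to compare you against. Judged on its own, your overall strategy --- induction on the syllable length of a normal form, peeling off a conjugating syllable $b_1^{\beta_1}$ whenever $g$ fails to be cyclically normal --- is the standard route and is consistent with how the cited source proceeds. The base case, the cyclically normal case, and the well-foundedness of the induction (since $h$ has a spelling with at most $k-1$ syllables) are all fine.

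The gap is in the repair step, and it is not mere bookkeeping: your case analysis for where $b_1^{\pm\beta_1}$ is absorbed omits the case where it merges into the core $w_3$ itself, and that is precisely the case that forces the $w_2, w'_2$ structure into existence. Take $\Gamma$ with two non-adjacent vertices $a, b$ of color $\infty$ and $g = aba$. This is a normal form but not cyclically normal; peeling off $a$ gives $h = ba^2$, which \emph{is} cyclically normal, so the inductive decomposition is $\tilde{w}_1 = w_2 = w'_2 = e$, $w_3 = ba^2$. Now $g = a\,(ba^2)\,a^{-1}$: the left $a$ cannot move past $b$, while the right $a^{-1}$ merges directly with the last syllable of $w_3$. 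So the absorption is into the core, not into $\tilde{w}_1$ nor into any pre-existing clique $w_2$ (both are empty), and it is not symmetric. The only valid output here is $w_1 = e$, $w_2 = a$, $w_3 = b$, $w'_2 = a$: a piece of the last syllable of the core must be split off and re-parsed as $w'_2$, and the unabsorbed conjugating syllable becomes $w_2$ rather than part of $w_1$. None of your three cases ever creates or enlarges $w_2, w'_2$, so the induction cannot close as written. Relatedly, once $w_2, w'_2$ change, conditions (3)--(5) are no longer ``inherited'' and each needs its own argument; the claim that (2) survives is plausible (the element $w_3 w'_2 w_2$ is unchanged, only its parsing moves), but (4) for the new $w_2$ and (5) are exactly where the cyclic normality of the core must be invoked, and that invocation is missing.
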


%
	
	\begin{proposition}\label{fact_word} Let $G = G(\Gamma, \mathfrak{p})$, and assume that $\mathfrak{p}$ has finite range $\{ c_1, ..., c_t \}$. Let $p$ be a prime such that if $c_i \neq \infty$ then $p > c_i$, for $i =1, ..., t$. Then for every $g \in G$ we have $sp(g) \subseteq sp(g^{p})$.
\end{proposition}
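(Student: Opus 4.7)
The plan is to reduce to the case that $g$ is cyclically normal via Fact~\ref{bark_fact}, and then prove the cyclically normal case by a syllable-counting argument whose sole use of the prime condition on $p$ is to prevent a single merged syllable from becoming trivial. I apply Fact~\ref{bark_fact} to write $g = w_1 w_2 w_3 w_2' w_1^{-1}$ in normal form with $h := w_3 w_2' w_2$ cyclically normal, and set $X := w_2 w_3 w_2' = w_2 h w_2^{-1}$, which is a cyclic rotation of $h$ and therefore itself cyclically normal. Then $g^p = w_1 X^p w_1^{-1}$ and $sp(g) = sp(w_1) \cup sp(X)$, so the proof splits into two parts: (i) show $sp(X) \subseteq sp(X^p)$, and (ii) show $sp(w_1) \cup sp(X^p) \subseteq sp(g^p)$.

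For (i), fix $v \in sp(X)$ and let $r$ be the number of $v$-syllables in a normal form $a_1^{\alpha_1} \cdots a_k^{\alpha_k}$ of $X$, occurring at positions $i_1 < \cdots < i_r$. The word $X^p$ has $rp$ occurrences of $v$, and by Fact~\ref{fact_word_1}(1) two of them can be combined only if all intermediate letters commute with $v$. Within any single copy of $X$, no two $v$-syllables merge since $X$ is already in normal form. Across consecutive copies, the intermediate letters are exactly the letters of $X$ at positions $> i_r$ or $< i_1$; if $r \geq 2$ and they all commuted with $v$, one could rearrange $X$ into an equivalent normal form beginning and ending with $v$, contradicting cyclic normality. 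Hence for $r \geq 2$, cross-copy merging is blocked and all $rp$ $v$-syllables survive as separate non-trivial syllables. For $r = 1$, either cross-copy merging is blocked by a non-commuting letter (giving $p$ distinct surviving syllables), or every non-$v$ letter of $X$ commutes with $v$; in the latter sub-case $X = v^{\alpha_{i_1}} X'$ with $v \notin sp(X')$ and $v$ commuting with $X'$, so $X^p = v^{p\alpha_{i_1}} (X')^p$, and the prime condition forces $p \alpha_{i_1} \not\equiv 0 \pmod{\mathfrak{p}(v)}$ (else $\alpha_{i_1} \equiv 0 \pmod{\mathfrak{p}(v)}$, contradicting $v^{\alpha_{i_1}} \ne e$). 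In every case $v \in sp(X^p)$.

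For (ii), in the normal form $w_1 X w_1^{-1}$ of $g$ given by Fact~\ref{bark_fact}(1), each letter of $w_1$ is blocked from cancelling with its counterpart in $w_1^{-1}$ by some non-commuting letter of $sp(X)$; since $sp(X^p) = sp(X)$ by (i), the same blockers persist in $X^p$, and a standard normal-form argument shows that $w_1 X^p w_1^{-1}$ yields a normal form for $g^p$ in which every letter of $w_1$ and every letter of the normal form of $X^p$ appears. The main obstacle is the case $r \geq 2$ of part (i), where only cyclic normality rules out the degenerate possibility that all $v$-occurrences of $X^p$ coalesce into a single syllable whose exponent is a multiple of $\mathfrak{p}(v)$ and therefore erases $v$ from the support.
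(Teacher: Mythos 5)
Your overall strategy is the paper's: decompose $g$ via Fact~\ref{bark_fact}, use the primality of $p$ to keep single syllables alive, and use cyclic normality to block cancellation between consecutive copies of the core. But there is a genuine gap at the step you yourself identify as the crux. The claim that $X = w_2 w_3 w_2' = w_2 h w_2^{-1}$ is ``a cyclic rotation of $h$ and therefore itself cyclically normal'' is false in general. Rotating the \emph{block} $w_2$ from the end of $h = w_3 w_2' w_2$ to the front is not a cyclic rotation at the level of syllables: in $h$ the two commuting blocks $w_2'$ and $w_2$ sit adjacent and amalgamate into a single block (this is why $h$ can be cyclically normal), whereas in $X$ they are separated by $w_3$. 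Since $sp(w_2) = sp(w_2')$ spans a complete graph, any $v \in sp(w_2)$ commutes with every letter of $X$ preceding its occurrence in $w_2$ and with every letter following its occurrence in $w_2'$, so $X$ always admits an equivalent normal form beginning and ending with $v$; when $w_2 \neq e \neq w_3$, $X$ is therefore \emph{never} cyclically normal. Concretely, take $w_1 = e$, $w_2 = v = w_2'$, $w_3 = w$ with $v$ not adjacent to $w$ and $\mathfrak{p}(v) = 3$: all conditions of Fact~\ref{bark_fact} hold (including (5), since $v \neq v^{-1}$), $h = wv^2$ is cyclically normal, but $X = vwv$ begins and ends with $v$.

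Consequently your case $r \geq 2$ of part (i) breaks down: cross-copy merging in $X^p$ is \emph{not} blocked --- it occurs at every $w_2'w_2$ juncture, where the two $v$-syllables coalesce into $v^{\alpha+\beta}$. What saves the statement is not cyclic normality of $X$ but conditions (3)--(5) of Fact~\ref{bark_fact}: completeness of $\Gamma_{sp(w_2)}$ makes $w_2'w_2$ a single commuting block, and $F(w_2) \cap \hat{L}(w_2') = \emptyset$ guarantees the merged exponents $\alpha_i + \beta_i$ do not vanish, so the merged syllables survive. Your proof never invokes condition (5), which is a telltale sign; that condition exists precisely for this juncture. The paper handles this by splitting into the cases $w_3 = e$, $w_2 = e$, and both nonzero, computing $g^p = w_1 w_2 (w_0')^{p-1} w_3 w_2' w_1^{-1}$ with $w_0'$ a normal form for $h$ in the mixed case and reading off the support from $sp(w_0') = sp(w_3) \cup sp(w_2)$. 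To repair your argument you should run your syllable-counting analysis on $h$ (which genuinely is cyclically normal) rather than on $X$, and then treat the boundary blocks $w_2, w_2'$ separately using (3)--(5); part (ii) also needs more care, since the blocker preventing a letter of $w_1$ from cancelling may lie in $w_1$ itself rather than in $sp(X)$, though that particular imprecision is harmless.
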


	\begin{proof} Let $g$ be written as $w_1 w_2 w_3 w'_2 w^{-1}_1$ as in Fact \ref{bark_fact}, and assume $g \neq e$. We make a case distinction.
\newline \underline{\em Case 1}. $w_3 = e$.
\newline Notice that $w_2 w'_2 \neq e$, because by assumption $g \neq e$, and that $w_2 w'_2$ is a normal form (recall Notation \ref{notation}). Let $a_1^{\alpha_1} \cdots a_k^{\alpha_k}$ be a normal form for $w_2 w'_2$. Then by items (3) and (4) of Fact \ref{bark_fact} we have:
$$g^{p} = w_1 (a_1^{\alpha_1} \cdots a_k^{\alpha_k})^{p} w^{-1}_1 = w_1 a_1^{p\alpha_{1}} \cdots a_k^{p\alpha_k} w^{-1}_1.$$
Now, necessarily, for every $\ell \in \{ 1, ..., k \}$, $a_{\ell}^{p \alpha_\ell} \neq e$,  since the order of $a_{\ell}$ does not divide $\alpha_\ell$ and $p$ is a prime. Thus, we are done.
\newline \underline{\em Case 2}. $w_2 = e$.
\newline By item (3) of Fact \ref{bark_fact} also $w'_2 = e$, and so, by item (2) of Fact \ref{bark_fact}, $w_3 w'_2 w_2 = w_3 \neq e$ is cyclically normal. Let $a_1^{\alpha_1} \cdots a_k^{\alpha_k}$ be a normal form for $w_3$.
\newline \underline{\em Case 2.1}. $k = 1$.
\newline In this case, letting $a_k^{\alpha_k} = a^{\alpha}$, we have $g^{p} = w_1 a^{p\alpha} w^{-1}_1$, and so, arguing as in Case 1, we are done.
\newline \underline{\em Case 2.2}. $k > 1$. 
\newline In this case $g^{p}$ is spelled by the following normal form:
$$w_1 \underbrace{w_3 \cdots w_3}_{p} w^{-1}_1,$$
and so, clearly, we are done.
\newline \underline{\em Case 3}. $w_3 \neq e$ and $w_2 \neq e$.
\newline In this case, letting $w'_0$ stand for a normal form for $w_3 w'_2 w_2$, $g^{p}$ is spelled by the following normal form:
	$$g^{p} = w_1 w_2\underbrace{w'_0 \cdots w'_0}_{p-1}w_3w'_2 w^{-1}_1,$$
Furthermore, by item (3) and (5) of Fact \ref{bark_fact}, $sp(w'_0) = sp(w_3) \cup sp(w_2) = sp(w_3) \cup sp(w'_2) = sp(w_3) \cup sp(w_2) \cup sp(w'_2)$, and so we are done.
\end{proof}
	
	\begin{proposition}\label{prop_for_first_nec} Let $G = G(\Gamma, \mathfrak{p})$ and $g \in G$.
	\begin{enumerate}[(1)]
	\item If $a_1, a_2, b_1, b_2 \in \Gamma - sp(g)$ are distinct and $a_i$ is not adjacent to $b_i$ ($i = 1, 2$), then for every $n \geq 2$ the element $ga_1^{-1}a_2b_1^{-1}b_2$ has no $n$-th root.
	\item If $a, b_1, b_2, b_3, b_4 \in \Gamma$ are distinct, $a$ is not adjacent to $b_i$ ($i = 1, 2, 3, 4$), and $\{ b_1, b_2, b_3, b_4 \} \cap sp(g) = \emptyset$, then for every $n \geq 2$ the element $ga^{-1}b_1^{-1}b_2ab_3^{-1}b_4$ has no $n$-th root.
\end{enumerate}
\end{proposition}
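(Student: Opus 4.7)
For both parts I apply the projection homomorphism $\mathbf{p} = \mathbf{p}_A$ of Fact~\ref{fact} for a judiciously chosen finite $A \subseteq \Gamma$: if $g a_1^{-1} a_2 b_1^{-1} b_2$ (resp.\ $g a^{-1} b_1^{-1} b_2 a b_3^{-1} b_4$) were an $n$-th power in $G$, so would be its image under $\mathbf{p}$ in $G_A$. The non-adjacency hypotheses force the induced graph $\Gamma_A$ to split, so that the presentation of $G_A$ given by Definition~\ref{def_cyclic_prod} exhibits $G_A$ as a free product, and one rules out such an $n$-th root by the standard length analysis of $n$-th powers in a free product.

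For (1) take $A = \{a_1, b_1\}$. Since $sp(g) \cap A = \emptyset$ and $a_2, b_2 \notin A$, the image of our element is $a_1^{-1} b_1^{-1}$; and since $a_1 \not\sim b_1$, the presentation of $G_A$ carries no commutation relation, so $G_A = C_{\mathfrak{p}(a_1)} * C_{\mathfrak{p}(b_1)}$ and $a_1^{-1} b_1^{-1}$ is cyclically reduced of free-product length $2$. Writing any putative $h \in G_A$ as $h = p v p^{-1}$ with $v$ cyclically reduced, one has $|h^n| = 2|p| + |v^n|$ in normal form, with $|v^n| = n|v|$ when $|v| \geq 2$ and $|v^n| \leq 1$ otherwise; a routine subcase check rules out $|h^n| = 2$ for $n \geq 2$ with $h^n \neq e$.

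For (2) take $A = \{a, b_1, b_2, b_3, b_4\}$. Since each $b_i \notin sp(g)$ (while $a$ may or may not lie in $sp(g)$), we have $\mathbf{p}(g) = a^k$ for some $k$, and so
\[
\mathbf{p}(g a^{-1} b_1^{-1} b_2 a b_3^{-1} b_4) \;=\; a^{k-1}\,(b_1^{-1} b_2)\,a\,(b_3^{-1} b_4).
\]
Because $a \not\sim b_i$ for each $i$, the graph $\Gamma_A$ is the disjoint union of $\{a\}$ and $\Gamma_{\{b_1, b_2, b_3, b_4\}}$, yielding $G_A \cong C_{\mathfrak{p}(a)} * H$ with $H = G_{\{b_1, b_2, b_3, b_4\}}$; by Fact~\ref{fact_word_1}(1) the words $b_1^{-1} b_2$, $b_3^{-1} b_4$ and $b_3^{-1} b_4 b_1^{-1} b_2$ are reduced in $H$ (the four letters being pairwise distinct), hence non-trivial. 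If $a^{k-1} \neq e$, the image is a cyclically reduced free-product word of length $4$, alternating between the two factors; equating it with $h^n$ for some $n \geq 2$ forces $n = 2$ and $h = h_1 h_2$ alternating, and matching free-product syllables then requires $b_1^{-1} b_2 = b_3^{-1} b_4$ in $H$, contradicting that these reduced words have the disjoint spectra $\{b_1, b_2\}$ and $\{b_3, b_4\}$. If $a^{k-1} = e$, the image has length $3$ but is not cyclically reduced (first and last free-product syllables both lie in $H$); conjugating by $b_3^{-1} b_4$ yields $(b_3^{-1} b_4 b_1^{-1} b_2)\,a$, a cyclically reduced element of free-product length $2$ to which the argument of (1) applies verbatim.

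The only delicate point is in the sub-case $a^{k-1} = e$ of (2): one must verify that the cyclically reduced conjugate $(b_3^{-1} b_4 b_1^{-1} b_2)\,a$ really has free-product length exactly $2$, which is precisely where Fact~\ref{fact_word_1}(1) applied to the pairwise distinct $b_1, b_2, b_3, b_4$ is used, to see that the $H$-factor is non-trivial.
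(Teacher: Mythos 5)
Your proof is correct and follows essentially the same route as the paper: project via the retraction $\mathbf{p}_A$ of Fact~\ref{fact} onto a small induced subgraph that splits as a free product, and rule out $n$-th roots there (the paper uses $A=\{a_2,b_2\}$ in part (1) where you use $\{a_1,b_1\}$, an immaterial symmetric choice). You additionally write out the free-product length analysis, including the degenerate subcase $a^{k-1}=e$ in part (2), which the paper leaves as an assertion; this is a welcome extra level of detail but not a different method.
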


	\begin{proof} We prove (1). Let $g_* = ga_1^{-1}a_2b_1^{-1}b_2$, $A = \{ a_2, b_2 \}$ and $\mathbf{p} = \mathbf{p}_A$ the homomorphism from Fact \ref{fact}. Then $\mathbf{p}(g_*) = a_2b_2$. Since $a_2$ is not adjacent to $b_2$, for every $n \geq 2$ the element $a_2b_2$ does not have an $n$-th root. As $\mathbf{p}_A$ is an homomorphism, we are done.

\smallskip 

\noindent We prove (2). Let $g_* = ga^{-1}b_1^{-1}b_2ab_3^{-1}b_4$, $A = \{ a, b_1, b_2, b_3, b_4 \}$ and $\mathbf{p} = \mathbf{p}_A$ the homomorphism from Fact \ref{fact}. There are two cases:
\newline \underline{\em Case 1}. $\mathbf{p}(g) = e$.
\newline  Then $\mathbf{p}(g_*) = a^{-1}b_1^{-1}b_2ab_3^{-1}b_4$. Since $a$ is not adjacent to $b_i$ ($i = 1, 2, 3, 4)$, for every $n \geq 2$ the element $a^{-1}b_1^{-1}b_2ab_3^{-1}b_4$ does not have an $n$-th root. 
\newline \underline{\em Case 2}. $\mathbf{p}(g) \neq e$.
\newline  Since $sp(\mathbf{p}(g)) \subseteq sp(g) \cap \{ a, b_1, b_2, b_3, b_4 \} \subseteq \{ a \}$ and $\mathbf{p}(g) \neq e$, we must have $sp(\mathbf{p}(g)) = \{ a \}$. Hence, $\mathbf{p}(g_*) = a^{\alpha}b_1^{-1}b_2ab_3^{-1}b_4$, for $\alpha \in \mathbb{Z} - \{ 0 \}$. Since $a$ is not adjacent to $b_i$ ($i = 1, 2, 3, 4)$, for every $n \geq 2$ the element $a^{\alpha}b_1^{-1}b_2ab_3^{-1}b_4$ does not have an $n$-th root. 
\end{proof}

\section{Negative Side}

In this section we show that conditions (a)-(d) of Theorem \ref{main_th} are necessary. Concerning conditions (a)-(c) we prove three separate lemmas: Lemmas \ref{lemma1}, \ref{lemma2} and \ref{lemma3}. Lemmas \ref{lemma2} and \ref{lemma3} are more general that needed for the proof of Theorem \ref{main_th}, and of independent interest. Concerning condition (d), it follows from Lemma \ref{lemma4} and Observation \ref{lemma5}, which are also more general that needed for our purposes.

 We denote the cyclic groups by $C_n, C_{\infty}$ (or $\mathbb{Z}_n, \mathbb{Z}_{\infty} = \mathbb{Z}$ in additive notation).

	\begin{lemma}\label{lemma1} Let $G = G(\Gamma, \mathfrak{p})$, with $|\Gamma| = 2^\omega$. Suppose that there does not exist a countable $A \subseteq \Gamma$ such that for every $a \in \Gamma$ and $a \neq b \in \Gamma - A$, $a$ is adjacent to $b$. Then $G$ does not admit a Polish group topology.
\end{lemma}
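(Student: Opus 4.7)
The plan is a proof by contradiction: assume $G$ admits a Polish topology $\mathfrak{d}$, and exhibit an element that both admits and fails to admit a $p$-th root, for a suitable prime $p$, by playing Fact \ref{771_fact}(2) against Proposition \ref{prop_for_first_nec}(1). First, observe that the failure of condition (a) is exactly the assertion that the complement graph $\bar{\Gamma}$ admits no countable vertex cover; since the vertex set of any maximal matching of $\bar{\Gamma}$ is a vertex cover, $\bar{\Gamma}$ contains an uncountable matching, giving a family $\{(a_\alpha, b_\alpha) : \alpha < \omega_1\}$ of pairwise vertex-disjoint non-edges of $\Gamma$.

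Next, fix a prime $p$ larger than every finite color relevant to our construction (so Proposition \ref{fact_word} will apply in suitable sub-products), choose a non-decreasing $f \in \omega^\omega$ with $f(n) \geq p+1$, and extract an $f$-continuity sequence $(\zeta_n)_{n<\omega}$ via Fact \ref{771_fact}(1) (using Convention \ref{convention}). Applying Observation \ref{observation_prelim} in the Polish group $G \times G$ iteratively to the uncountable image of $\alpha \mapsto (a_\alpha, b_\alpha)$, and shrinking radii in accordance with $\zeta_{n+1}$ using continuity of multiplication, we build disjoint pairs of matching indices $(\alpha_n, \beta_n)$ for which
$$ d_n \;:=\; a_{\alpha_n}^{-1} a_{\beta_n} b_{\alpha_n}^{-1} b_{\beta_n} $$
is a four-syllable reduced word carrying two matching non-edges and satisfies $\mathfrak{d}(d_n, e) < \zeta_{n+1}$. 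Setting $k(n) := p$, Fact \ref{771_fact}(2) then yields a solution $(x_n)_{n<\omega}$ to $x_n = x_{n+1}^p d_n$; in particular $x_0 d_0^{-1} = x_1^p$ admits a $p$-th root. On the other hand, $x_0 d_0^{-1} = x_0 \cdot b_{\beta_0}^{-1} b_{\alpha_0} a_{\beta_0}^{-1} a_{\alpha_0}$ has exactly the form $g\, a_1^{-1} a_2 b_1^{-1} b_2$ of Proposition \ref{prop_for_first_nec}(1), with $g = x_0$ and $(a_1,a_2,b_1,b_2) = (b_{\beta_0}, b_{\alpha_0}, a_{\beta_0}, a_{\alpha_0})$; the required non-adjacencies $a_i \not E b_i$ and the distinctness of the four vertices hold by the choice of the matching. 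Hence Proposition \ref{prop_for_first_nec}(1) would give that $x_0 d_0^{-1}$ has no $p$-th root --- the desired contradiction --- \emph{provided} that $\{a_{\alpha_0}, b_{\alpha_0}, a_{\beta_0}, b_{\beta_0}\} \cap sp(x_0) = \emptyset$.

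The main technical hurdle is arranging this support disjointness, since naively the identity $x_0 = x_1^p d_0$ forces $sp(x_0) \supseteq sp(d_0)$. The plan to overcome this is to transfer the entire construction through the homomorphism $\mathbf{p}_A$ of Fact \ref{fact} for a countable $A \subseteq \Gamma$ containing $\bigcup_n sp(d_n)$, and to exploit Proposition \ref{fact_word} (available since $p$ exceeds all relevant finite colors) inside $G_A$: the relations $\mathbf{p}_A(x_n) = \mathbf{p}_A(x_{n+1})^p \mathbf{p}_A(d_n)$, together with the iterated containment $sp(y) \subseteq sp(y^p)$, allow tight control of $sp_{G_A}(\mathbf{p}_A(x_n))$. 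Combined with a delayed choice of the initial pair $(\alpha_0, \beta_0)$ --- picked from the uncountably many matching alternatives \emph{after} the tail $(x_n)_{n \geq 1}$ is fixed, so as to avoid the finite set $sp(x_1)$ while still making $d_0$ small via a final application of Observation \ref{observation_prelim} --- this yields the support disjointness needed to deploy Proposition \ref{prop_for_first_nec}(1). The support calculus via cyclically normal forms (Fact \ref{bark_fact}) and Proposition \ref{fact_word} is where the argument is most delicate, and completing it rigorously is the substantive part of the proof.
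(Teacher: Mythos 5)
Your opening combinatorial step is wrong, and it causes you to miss half of the proof. The negation of condition (a) is not the assertion that the complement graph $\bar{\Gamma}$ has no countable vertex cover: unwinding the quantifiers, a countable $A$ witnesses (a) exactly when \emph{both} endpoints of every non-edge lie in $A$, so (a) fails exactly when uncountably many vertices are incident to some non-edge, which is strictly weaker than the non-existence of a countable vertex cover. Concretely, if $\bar{\Gamma}$ is a star with centre $a_*$ and uncountably many leaves, then (a) fails, yet $\{a_*\}$ is a finite vertex cover and every matching of $\bar{\Gamma}$ has size one, so the uncountable matching you extract need not exist. The correct dichotomy is: either $\bar{\Gamma}$ contains an uncountable matching (your case, the paper's Case 1), or some single vertex $a_*$ is non-adjacent to uncountably many $b_i$ (the paper's Case 2). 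That second case requires a genuinely different witness of non-divisibility, namely $g a_*^{-1} b_1^{-1} b_2 a_* b_3^{-1} b_4$ and Proposition \ref{prop_for_first_nec}(2) rather than (1), and --- since the fixed element $a_*$ occurs inside every $d_n$ and is not close to $e$ --- the continuity sequence must be extracted from Fact \ref{771_fact}(1) with the parameter $\bar{g}_n = (a_*)$ rather than via Convention \ref{convention}. None of this appears in your proposal.

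Within Case 1, your derivation of the contradiction at stage $0$ also does not close. You correctly identify that you need $sp(x_0)$ disjoint from $\{a_{\alpha_0},b_{\alpha_0},a_{\beta_0},b_{\beta_0}\}$, but the delayed choice of $(\alpha_0,\beta_0)$ cannot deliver this: once the tail is fixed and the four vertices are chosen outside the finite set $sp(x_1)$, the normal form of $x_0 = x_1^p d_0$ is obtained by concatenation, so $sp(x_0) \supseteq sp(d_0)$ --- exactly the obstruction you were trying to remove. (Applying Proposition \ref{prop_for_first_nec}(1) with $g = x_1^p$ instead shows that $x_0$ has no $p$-th root, but nothing in the system asserts that it has one.) The paper resolves this by running the containment $sp(g_{n+1}) \subseteq sp(g_{n+1}^p) = sp(g_n h_n) \subseteq sp(g_n) \cup sp(h_n)$ from Proposition \ref{fact_word} forwards to get $sp(g_n) \subseteq sp(g_0) \cup \bigcup_{\ell<n} sp(h_\ell)$, and then applying Proposition \ref{prop_for_first_nec}(1) to the equation $g_{n+1}^p = g_n h_n$ at a \emph{large} stage $n$, chosen so that the finite set $sp(g_0)$ misses the stage-$n$ vertices (the earlier stages' vertices automatically do, the pairs being disjoint). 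Finally, you should fix the colours of the $a$'s and $b$'s by pigeonhole before choosing $p$ and before projecting, so that the subgraph you project onto has finite colour range and Proposition \ref{fact_word} is actually applicable there.
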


	\begin{proof} Suppose that $G = (\Gamma, \mathfrak{p})$ is as in the assumptions of the theorem, and that $G = (G, \mathfrak{d})$ is Polish. Then either of the following cases happens:
	\begin{enumerate}[(i)]
		\item in $\Gamma$ there are $\{ a_i : i < \omega_1 \}$ and $\{ b_i : i < \omega_1 \}$ such that if $i < j < \omega_1$, then $a_i \neq a_j$, $b_i \neq b_j$, $|\{ a_i, a_j, b_i, b_j \}| = 4$ and $a_i$ is not adjacent to $b_i$;
	\item in $\Gamma$ there are $a_*$ and $\{ b_i : i < \omega_1 \}$ such that if $i < j < \omega_1$, then $|\{ a_*, b_i, b_j \}| = 3$ and $a_*$ is not adjacent to $b_i$.
\end{enumerate}
\underline{\em Case 1}. There are $\{ a_i : i < \omega_1 \}$ and $\{ b_i : i < \omega_1 \}$ as in (i) above.
\newline Without loss of generality we can assume that all the $\{ a_i : i < \omega_1 \}$ have fixed color $k^*_1$ and all the $\{ b_i : i < \omega_1 \}$ have fixed color $k^{*}_2$, for some $k^*_1, k^*_2 \in \{ p^n : p \text{ prime and } 1 \leq n \} \cup \{ \infty \}$. Let
$p$ be a prime such that if $k^*_\ell \neq \infty$ then $p > k^*_\ell$, for $\ell = 1, 2$. Recalling Convention \ref{convention}, let $(\zeta_n)_{n < \omega} \in (0, 1)_{\mathbb{R}}^\omega$ be as in Fact \ref{771_fact} for $f \in \omega^{\omega}$ constantly $p + 10$.
Using Observation \ref{observation_prelim}, by induction on $n < \omega$, choose $(i_n = i(n), j_n = j(n))$ such that:
	\begin{enumerate}[(a)]
	\item if $m < n$, then $j_m < i_n$;
	\item $i_n < j_n < \omega_1$;
	\item $\mathfrak{d}(a_{j(n)}^{-1}a_{i(n)}, e), \mathfrak{d}(b_{j(n)}^{-1}b_{i(n)}, e) < \zeta_{n+8}$.
\end{enumerate}
Consider now the following set of equations:
	$$ \Delta = \{ x_n = (x_{n+1})^{p}h_n^{-1} : n < \omega \},$$
where $h_n = b_{i(n)}^{-1} b_{j(n)} a_{i(n)}^{-1} a_{j(n)}$. By (c) above and Fact \ref{771_fact}(1)(B) we have $\mathfrak{d}(h_n^{-1}, e) < \zeta_{n+1}$, and so by Fact \ref{771_fact}(2) the set $\Delta$ is solvable in $G$. Let $(g'_n)_{n < \omega}$ witness this. Let $A$ the set of vertices of color $k^*_1$ or $k^*_2$, $\mathbf{p} = \mathbf{p}_A$ the homomorphism from Fact \ref{fact} and let $g_n = \mathbf{p}(g'_n)$. Then for every $n < \omega$ we have:
$$
 G \models (g_{n+1})^{p} = g_nh_n,
$$
and so by Proposition \ref{fact_word} we have:
$$
 sp(g_n) \subseteq sp(g_0) \cup \{ b_{i(\ell)},  b_{j(\ell)}, a_{i(\ell)}, a_{j(\ell)}: \ell < n \}.
$$
Let $n < \omega$ be such that $sp(g_0) \cap \{ b_{i(n)},  b_{j(n)}, a_{i(n)}, a_{j(n)} \} = \emptyset$. Then:
$$(g_{n+1})^{p} = g_nb_{i(n)}^{-1} b_{j(n)} a_{i(n)}^{-1} a_{j(n)}  \text{ and } sp(g_n) \cap \{ b_{i(n)},  b_{j(n)}, a_{i(n)}, a_{j(n)} \} = \emptyset,$$
which contradicts Proposition \ref{prop_for_first_nec}(1).
\newline \underline{\em Case 2}. There are $a_*$ and $\{ b_i : i < \omega_1 \}$ as in (ii) above.
\newline Let $k^*_1 = \mathfrak{p}(a_*)$. Without loss of generality, we can assume that all the $\{ b_i : i < \omega_1 \}$ have fixed color $k^{*}_2$, for some $k^{*}_2 \in \{ p^n : p \text{ prime and } 1 \leq n \} \cup \{ \infty \}$. Let
$p$ be a prime such that if $k^*_\ell \neq \infty$ then $p > k^*_\ell$, for $\ell = 1, 2$. Let $(\zeta_n)_{n < \omega} \in (0, 1)_{\mathbb{R}}^\omega$ be as in Fact \ref{771_fact} for $\bar{g}_n = (a_*)$ (and so in particular $\ell(n) = 1$) and $f \in \omega^{\omega}$ constantly $p + 10$.
Using Observation \ref{observation_prelim}, by induction on $n < \omega$, choose $(i_n = i(n), j_n = j(n), i'_n = i'(n), j'_n = j'(n))$ such that:
	\begin{enumerate}[(a)]
	\item if $m < n$, then $j'_m < i_n$;
	\item $i_n < j_n < i'_n < j'_n < \omega_1$;
	\item $\mathfrak{d}(b_{j(n)}^{-1} b_{i(n)}, e), \mathfrak{d}(b_{j'(n)}^{-1} b_{i'(n)}, e) < \zeta_{n+8}$.
\end{enumerate}
Consider now the following set of equations:
	$$ \Delta = \{ x_n = (x_{n+1})^{p}h_n^{-1} : n < \omega \},$$
where $h_n = a^{-1}_*b_{i(n)}^{-1} b_{j(n)} a_*b_{i'(n)}^{-1} b_{j'(n)}$. By (c) above and Fact \ref{771_fact}(1)(B) we have $\mathfrak{d}(h_n^{-1}, e) < \zeta_{n+1}$, and so by Fact \ref{771_fact}(2) the set $\Delta$ is solvable in $G$. Let $(g'_n)_{n < \omega}$ witness this. Let $A$ be the set of vertices of color $k^{*}_1$ or $k^{*}_2$, $\mathbf{p} = \mathbf{p}_A$ the homomorphism from Fact \ref{fact} and let $g_n = \mathbf{p}(g'_n)$. Then for every $n < \omega$ we have:
$$
 G \models (g_{n+1})^{p} = g_nh_n,
$$
and so by Proposition \ref{fact_word} we have:
$$
 sp(g_n) \subseteq sp(g_0) \cup \{ a_*\} \cup \{ b_{i(\ell)},  b_{j(\ell)}, b_{i'(\ell)},  b_{j'(\ell)}: \ell < n \}.
$$
Let $n < \omega$ be such that $sp(g_0) \cap \{ b_{i(n)},  b_{j(n)}, b_{i'(\ell)},  b_{j'(\ell)}\} = \emptyset$. Then:
$$(g_{n+1})^{p} = g_na^{-1}_*b_{i(n)}^{-1} b_{j(n)} a_*b_{i'(n)}^{-1} b_{j'(n)}  \text{ and } sp(g_n) \cap \{ b_{i(n)},  b_{j(n)}, b_{i'(n)}^{-1} b_{j'(n)} \} = \emptyset,$$
which contradicts Proposition \ref{prop_for_first_nec}(2).
\end{proof}


	Recall that we denote the cyclic group of order $n$ by $C_n$.

	\begin{lemma}\label{lemma2} Let $G = G' \oplus G''$, with $G'' = \bigoplus_{n < \omega} G_n$, $G_n = \bigoplus_{\alpha < \lambda_n} C_{k(n)}$, $\aleph_0 < \lambda_n$, $k(n) = p_n^{t(n)}$, for $p_n$ prime and $1 \leq t(n)$, and the $k(n)$ pairwise distinct. Then $G$ does not admit a Polish group topology.
\end{lemma}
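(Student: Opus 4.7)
The plan is to assume $(G, \mathfrak{d})$ is Polish and apply Fact \ref{771_fact}(2) to a carefully chosen system of equations, deriving a contradiction by showing that the projection of the solution to $G''$ would have infinite support in $\bigoplus_n G_n$. First I pass to an infinite subsequence of the decomposition and relabel so that $(k(n))_{n<\omega}$ is strictly increasing, and one of the following holds: either (A) the primes $p_n$ are pairwise distinct, or (B) $p_n = p$ is constant and $t(n) > \sum_{j<n} t(j)$ for every $n \geq 1$. This is achievable by pigeonhole: if infinitely many distinct primes appear among the $p_n$, extract a subseq of pairwise distinct primes; otherwise some prime $p$ appears infinitely often, and since the corresponding $k(n) = p^{t(n)}$ are pairwise distinct the $t(n)$'s at those indices are unbounded, so a super-increasing subseq exists. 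The equations will only involve the $G_n$'s along the chosen subsequence, but the projection $\pi : G \to G''$ is always that of the full $G$.

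Next, apply Fact \ref{771_fact}(1) (with $\bar g_n = \emptyset$) for $f(n) = k(n) + 10$ to obtain an $f$-continuity sequence $(\zeta_n)$. For each $n$, let $\{ e_{n,\alpha} : \alpha < \lambda_n \}$ be the canonical generators of $G_n = \bigoplus_{\alpha < \lambda_n} C_{k(n)}$; since $\lambda_n > \aleph_0$, Observation \ref{observation_prelim} yields $\alpha \neq \beta$ with $\mathfrak{d}(e_{n,\alpha}^{-1} e_{n,\beta}, e) < \zeta_{n+8}$. I set $h_n := e_{n,\alpha}^{-1} e_{n,\beta} \in G_n$; written additively as $e_{n,\beta} - e_{n,\alpha}$, it lies in two independent summands and so has maximal order $k(n)$. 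By Fact \ref{771_fact}(1)(B) applied to the length-$1$ term $x^{-1}$ we may assume $\mathfrak{d}(h_n^{-1}, e) < \zeta_{n+1}$, so Fact \ref{771_fact}(2) guarantees a solution $(g_n)_{n<\omega} \subseteq G$ to $x_n = x_{n+1}^{k(n)} h_n^{-1}$.

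Finally, let $\pi : G \to G''$ be the canonical projection (a homomorphism since $G''$ commutes with $G'$), and let $\pi_m : G'' \to G_m$ be projection onto the $m$-th summand. Using that $G''$ is abelian and $h_n \in G_n$, applying $\pi_m \circ \pi$ to the equations and telescoping for $N > m$ gives in $G_m$:
\[ \pi_m \pi(g_0) \;=\; \Bigl(\prod_{j<N} k(j)\Bigr)\, \pi_m \pi(g_N) \;-\; \Bigl(\prod_{j<m} k(j)\Bigr)\, h_m. \]
The $p_m$-valuation of $\prod_{j<N} k(j)$ is $\sum_{j<N,\, p_j = p_m} t(j)$, which already equals $t(m)$ at $N = m+1$ in case (A) and grows without bound in case (B); either way, for $N$ large the first term vanishes in $G_m$. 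The $p_m$-valuation of $\prod_{j<m} k(j)$ is $\sum_{j<m,\, p_j = p_m} t(j)$, which is $0$ in case (A) and, by super-increasingness, strictly less than $t(m)$ in case (B); in either case multiplication by this integer is injective on $\langle h_m \rangle$ (since $h_m$ has order $k(m)$), so $\prod_{j<m} k(j) \cdot h_m \neq 0$. Hence $\pi_m \pi(g_0) \neq 0$ for every $m$, contradicting the finite support of $\pi(g_0) \in \bigoplus_n G_n$. The delicate point I anticipate is arranging $h_n$ to be of maximal order \emph{and} close to $e$ simultaneously; Observation \ref{observation_prelim} alone only yields nearby elements, but applying it to the canonical generators circumvents this because $e_{n,\beta} - e_{n,\alpha}$ is automatically of full order $k(n)$.
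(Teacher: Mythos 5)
Your proof is correct and follows essentially the same route as the paper: the same case split (pairwise distinct primes versus a single prime with super-increasing exponents), the same system of equations $x_n = (x_{n+1})^{k(n)}d_n$ with $d_n \in G_n$ of full order $k(n)$ and $\mathfrak{d}$-close to $e$ (obtained by applying Observation \ref{observation_prelim} to the canonical generators), solved via Fact \ref{771_fact}(2), and the same telescoping/projection argument contradicting the finite support of the solution in $\bigoplus_{n} G_n$. One wording slip: in case (B) multiplication by $\prod_{j<m} k(j)$ is in general \emph{not} injective on $\langle h_m \rangle$ when $\sum_{j<m} t(j) > 0$; what you actually need --- and already have --- is that $h_m$ has order exactly $p^{t(m)}$ while the $p$-valuation of the multiplier is $< t(m)$, so $\prod_{j<m} k(j) \cdot h_m$ has order $p^{t(m) - \sum_{j<m} t(j)} > 1$ and is therefore nonzero.
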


	\begin{proof}
Suppose that $G = (G, \mathfrak{d})$ is Polish and let $(\zeta_n)_{n < \omega} \in (0, 1)_{\mathbb{R}}^\omega$ be as in Fact \ref{771_fact} for $f \in \omega^{\omega}$ such that $f(n) = k(n) + 2$.
Assume that $G = G' \oplus G''$ is as in the assumptions of the lemma. 
	Without loss of generality we can assume that either of the following cases happens:
\begin{enumerate}[(i)]
	\item for every $n < m < \omega$, $p_n < p_m$;
	\item for every $n < \omega$, $p_n = p$ and $\prod_{i < n} p^{t(i)}$ is not divisible by $p^{t(n)}$.
\end{enumerate}
Using Observation \ref{observation_prelim}, by induction on $n < \omega$, choose $g_n, h_n \in G_n$ such that $g_n, h_n$ and $h_n^{-1}g_n$ have order $k(n)$ and $\mathfrak{d}(h_n^{-1}g_n, e) < \zeta_{n+1}$. Consider now the following set of equations:
	$$ \Gamma = \{ x_n = (x_{n+1})^{k(n)} h_n^{-1}g_n : n < \omega \}.$$
By Fact \ref{771_fact}(2) the set $\Gamma$ is solvable in $G$. Let $(d_n)_{n < \omega}$ witness this. Let then $n < \omega$ be such that $d_0 \in G' \oplus \bigoplus_{i < n} G_i$. Notice now that:
\[ \begin{array}{rcl}
	d_0 & = & (d_1)^{k(0)}h^{-1}_0g_0 \\
		& = & ((d_2)^{k(1)}h^{-1}_1g_1)^{k(0)}h^{-1}_0g_0 \\
		& = & (...((d_{n_{}+1})^{k(n)}h^{-1}_{n}g_{n})^{k(n-1)} \cdots h^{-1}_0g_0. 
\end{array}	\]
Let $\mathbf{p} = \mathbf{p}_{n}$ be the projection of $G$ onto $G_{n}$. Then we have:
$$G_{n} \models e = d_0 = (\mathbf{p}(d_{n+1})^{k(n)}h^{-1}_{n}g_{n})^{\prod_{i < n} k(i)} = (h^{-1}_{n}g_{n})^{\prod_{i < n} k(i)},$$
which is absurd.
\end{proof}


	When we write $G = \bigoplus_{\alpha < \lambda} \mathbb{Z}x_\alpha$ we mean that $x_{\alpha}$ is the generator of the $\alpha$-th copy of $\mathbb{Z}$. This convention is used in Lemmas \ref{lemma3} and \ref{lemma4}, and Observation \ref{lemma5}.
	
	\begin{lemma}\label{lemma3} Let $G = G_1 \oplus G_2$, with $G_2 = \bigoplus_{\alpha < \lambda} \mathbb{Z}x_\alpha$ and $\lambda > \aleph_0$. Then $G$ does not admit a Polish group topology.
\end{lemma}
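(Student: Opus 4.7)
The plan is to adapt the strategy of Lemma \ref{lemma2}, but exploit the fact that no nonzero element of a direct sum of copies of $\mathbb{Z}$ is infinitely $2$-divisible. Suppose toward a contradiction that $(G, \mathfrak{d})$ is Polish, and apply Fact \ref{771_fact}(1) (with Convention \ref{convention}, i.e.\ $\bar{g}_n = \emptyset$) for $f \in \omega^\omega$ constantly equal to, say, $10$, obtaining a continuity sequence $(\zeta_n)_{n < \omega}$. Using Observation \ref{observation_prelim} inductively---at stage $n$, applied to the set $\{x_\alpha : \alpha \in \lambda \setminus \{\alpha_i, \beta_i : i < n\}\}$, which remains uncountable since $\lambda > \aleph_0$ and only finitely many indices have been removed---I choose pairwise distinct $\{\alpha_n, \beta_n : n < \omega\} \subseteq \lambda$ satisfying $\mathfrak{d}(x_{\alpha_n}^{-1} x_{\beta_n}, e) < \zeta_{n+1}$. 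Set $d_n := x_{\alpha_n}^{-1} x_{\beta_n} \in G_2$, which equals $x_{\beta_n} - x_{\alpha_n}$ in additive notation.

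Next I consider the system of equations
\[
\Gamma = \{\, y_n = (y_{n+1})^{2} d_n : n < \omega \,\}.
\]
By Fact \ref{771_fact}(2) with $k(n) = 2$ (so that $1 \le k(n) < f(n)$), the system $\Gamma$ is solvable in $G$; fix a solution $(a_n)_{n<\omega}$. The direct sum decomposition $G = G_1 \oplus G_2$ yields a group homomorphism $\pi \colon G \to G_2$, and letting $b_n := \pi(a_n) \in G_2$, the identity $b_n = 2 b_{n+1} + d_n$ holds additively in $G_2$ for every $n$ (because $d_n \in G_2$). Unrolling the recursion, for each $n$,
\[
b_0 \;=\; 2^n b_n + \sum_{i < n} 2^{i} d_i.
\]

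The contradiction is now extracted from a single coordinate. Since $b_0$ has finite support, pick $j < \omega$ with $\alpha_j \notin \operatorname{supp}(b_0)$ and take $n = j+1$. By pairwise distinctness of the chosen indices, $d_i$ has $\alpha_j$-coordinate equal to $-1$ if $i = j$ and equal to $0$ otherwise, so the $\alpha_j$-coordinate of $\sum_{i<n} 2^{i} d_i$ is $-2^j$, while the $\alpha_j$-coordinate of $b_0$ is $0$. Evaluating the displayed identity at $\alpha_j$ yields $2^{n} (b_n)(\alpha_j) = 2^{j}$, forcing $(b_n)(\alpha_j) = 2^{j-n} = 1/2 \notin \mathbb{Z}$, contradicting $b_n \in G_2 = \bigoplus_\alpha \mathbb{Z} x_\alpha$.

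The main subtlety is ensuring the pool from which the $x_{\alpha_n}, x_{\beta_n}$ are drawn remains uncountable at every stage (so Observation \ref{observation_prelim} applies), which requires $\lambda > \aleph_0$ and the fact that distinct indices yield distinct generators. Everything else is routine bookkeeping in the abelian group $G_2$ combined with the continuity-sequence machinery of Fact \ref{771_fact}.
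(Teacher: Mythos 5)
Your proposal is correct and follows essentially the same route as the paper's proof: the same system of equations $x_n=(x_{n+1})^2(x_{\beta_n}-x_{\alpha_n})$ solved via Fact \ref{771_fact}(2) after choosing close pairs of generators with Observation \ref{observation_prelim}, with the contradiction coming from failure of $2$-divisibility in $\bigoplus_{\alpha<\lambda}\mathbb{Z}x_\alpha$. The only cosmetic difference is that you project onto all of $G_2$ and evaluate a single coordinate, whereas the paper projects onto the rank-two summand $\mathbb{Z}x_{i_n}\oplus\mathbb{Z}x_{j_n}$.
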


	\begin{proof} Suppose that $G = (G, \mathfrak{d})$ is Polish and let $(\zeta_n)_{n < \omega} \in (0, 1)_{\mathbb{R}}^\omega$ be as in Fact \ref{771_fact} for $f \in \omega^{\omega}$ constantly $2 + 10$. Assume that $G = G_1 \oplus G_2$ is as in the assumptions of the lemma.
Using Observation \ref{observation_prelim}, by induction on $n < \omega$, choose $(i_n, j_n)$ such that:
	\begin{enumerate}[(i)]
	\item if $m < n$, then $j_m < i_n$;
	\item $i_n < j_n < \omega_1 \leq \lambda$;
	\item $\mathfrak{d}(x_{i_n}, x_{j_n}) < \zeta_{n+1}$.
	\end{enumerate}
	For every $n < \omega$ let:
	\begin{enumerate}[(a)]
	\item $x_{i_n} = h_n$;
	\item $x_{j_n} = g_n$;
	\item $\mathbb{Z}x_{i_n} \oplus \mathbb{Z}x_{j_n} = H_n$.
	\end{enumerate}
Consider now the following set of equations:
	$$ \Gamma = \{ x_n = (x_{n+1})^{2} h_n^{-1}g_n : n < \omega \}.$$
By Fact \ref{771_fact}(2) the set $\Gamma$ is solvable in $G$. Let $(d_n)_{n < \omega}$ witness this. Let then $n < \omega$ be such that $d_0 \in G_1 \oplus \bigoplus_{i < n} H_n$. Notice now that:
\[ \begin{array}{rcl}
	d_0 & = & (d_1)^{2}h^{-1}_0g_0 \\
		& = & ((d_2)^{2}h^{-1}_1g_1)^{2}h^{-1}_0g_0 \\
		& = & (...((d_{n_{}+1})^{2}h^{-1}_{n}g_{n})^{2} \cdots h^{-1}_0g_0. 
\end{array}	\]
Let $\mathbf{p}$ be the projection of $G$ onto $H_n$. Then we have:
$$H_{n} \models e = d_0 = (\mathbf{p}(d_{n+1})^{2}h^{-1}_{n}g_{n})^{2^n} = (h^{-1}_{n}g_{n})^{2^n},$$
which is absurd, since $H_n = \mathbb{Z}x_{i_n} \oplus \mathbb{Z}x_{j_n}$ is torsion-free and $h^{-1}_{n}g_{n} \neq e$.
\end{proof}

	In the rest of this section we use additive notation.
	
	\begin{lemma}\label{lemma4} Let $G = (G, \mathfrak{d})$ be an uncountable Polish group, $p$ a prime and $1 \leq t < \omega$. Suppose that $G = G_1 \oplus G_2$, with $G_2 = \bigoplus_{\alpha < \lambda} \mathbb{Z}_{p^t}x_{\alpha}$. If $\lambda > \aleph_0$, then there is $\bar{y} \subseteq G$ such that:
	\begin{enumerate}[(a)]
	\item $\bar{y} = (y_{\alpha} : \alpha < 2^{\aleph_0})$;
	\item $p^t y_{\alpha} = 0$ and, for $\ell < t$, $p^{\ell} y_{\alpha} \neq 0$;
	\item if $\alpha < \beta$, then $p^{t} (y_{\alpha} - y_{\beta}) = 0$, and, for $\ell < t$, $p^{\ell} (y_{\alpha} - y_{\beta}) \neq 0$;
	\item if $\alpha < \beta$, then $y_{\alpha} - y_{\beta}$ is not divisible by $p$ in $G$.
	\end{enumerate}
\end{lemma}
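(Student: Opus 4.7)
The plan is to build, for each $\eta\in 2^\omega$, an element $y_\eta\in G$ as the Polish limit of partial sums of ``small'' generator-differences $u_\sigma:=x_{\beta_\sigma}-x_{\alpha_\sigma}\in G_2$ arranged along the binary tree $2^{<\omega}$. Each such $u_\sigma$ has order exactly $p^t$; in the decomposition $G_2=\bigoplus_{\gamma<\lambda}\mathbb{Z}_{p^t}x_\gamma$ it has coordinate $-1$ at $\alpha_\sigma$, $+1$ at $\beta_\sigma$, and $0$ elsewhere. The unit coordinate at $\alpha_\sigma$ is the algebraic feature that will enforce both the order conditions in (b), (c) and the non-$p$-divisibility in (d); after discarding the single branch $\eta\equiv 0$ (which yields $y_\eta=0$) we still have $2^{\aleph_0}$ candidates for $\bar y$.

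Concretely, first apply Fact \ref{771_fact}(1) with $\bar g=\emptyset$ and $f\in\omega^\omega$ constantly $p^t+10$ to get a continuity sequence $(\zeta_n)_{n<\omega}$. Then, by induction on $\sigma\in 2^{<\omega}$ in breadth-first order, use Observation \ref{observation_prelim} on the still-uncountable set of $x_\gamma$'s not yet used to pick distinct fresh $\alpha_\sigma\ne\beta_\sigma<\lambda$ with $\mathfrak{d}(x_{\beta_\sigma}-x_{\alpha_\sigma},0)$ small enough so that, by continuity of addition at the already-built $y_\sigma$, one has $\mathfrak{d}(y_\sigma+(x_{\beta_\sigma}-x_{\alpha_\sigma}),y_\sigma)<\zeta_{|\sigma|+1}$. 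Set $u_\sigma:=x_{\beta_\sigma}-x_{\alpha_\sigma}$, $y_\emptyset:=0$, $y_{\sigma\frown 0}:=y_\sigma$, $y_{\sigma\frown 1}:=y_\sigma+u_\sigma$. Since $\zeta_{n+1}<\zeta_n/2$ forces $\sum_n\zeta_n<\infty$, for each $\eta\in 2^\omega$ the sequence $(y_{\eta\restriction n})_{n<\omega}$ is Cauchy in $\mathfrak{d}$ and converges to some $y_\eta\in G$.

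For the verifications: $p^ty_\eta=\lim_n p^ty_{\eta\restriction n}=0$ by continuity of $g\mapsto p^tg$, since each partial sum is a finite combination of order-$p^t$ elements of $G_2$. A direct algebraic computation shows that for any $\eta\ne\nu$ with common prefix $\sigma:=\eta\wedge\nu$ of length $n_0$, every partial difference $y_{\eta\restriction n}-y_{\nu\restriction n}\in G_2$ ($n>n_0$) has $x_{\alpha_\sigma}$-coordinate exactly $\pm 1$, because later tree nodes use only fresh generators and therefore contribute $0$ to the $\alpha_\sigma$-coordinate; the analogous computation with $\nu\equiv 0$ handles the order of $y_\eta$ itself and the remainder of (b). The main obstacle is transferring this $\pm 1$ coordinate information to the Polish limit: the natural algebraic projection $\pi_{\alpha_\sigma}\colon G\to\mathbb{Z}_{p^t}$ with kernel $G_1\oplus\bigoplus_{\gamma\ne\alpha_\sigma}\mathbb{Z}_{p^t}x_\gamma$ need not be $\mathfrak{d}$-continuous, so one cannot naively interchange $\pi_{\alpha_\sigma}$ with a limit. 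For (d) I would argue by contradiction: supposing $y_\eta-y_\nu=pz$ with $z=z_1+z_2\in G_1\oplus G_2$, the sequence $w_n:=(y_{\eta\restriction n}-y_{\nu\restriction n})-pz_2$ lies in $G_2$ with constant $x_{\alpha_\sigma}$-coordinate a unit of $\mathbb{Z}_{p^t}$ yet converges in $G$ to $pz_1\in G_1$; ruling out such convergence, using $G_2\cap pG=pG_2$ (an immediate consequence of $G=G_1\oplus G_2$) together with the coordinate rigidity of $(w_n)$, is the delicate step where I expect the hardest work.
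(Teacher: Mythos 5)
Your construction of the continuum-sized family is essentially the paper's: small differences $x_{\beta}-x_{\alpha}$ of fresh generators summed along Cauchy sequences indexed by a binary tree (the paper uses $y_A=\sum_{k\in A}(x_{i_k}-x_{j_k})$ for $A\subseteq\omega$, which is your tree with $u_\sigma$ depending only on $|\sigma|$). The gap is exactly at the step you flag as ``the delicate step where I expect the hardest work'', and it cannot be closed in the form you propose. For two branches $\eta\neq\nu$ differing in infinitely many coordinates, the partial differences $w_n$ do carry a unit $x_{\alpha_\sigma}$-coordinate, but the limit $y_\eta-y_\nu$ is taken in an arbitrary Polish group topology that need not interact with the algebraic decomposition $G_1\oplus\bigoplus_\gamma\mathbb{Z}_{p^t}x_\gamma$ at all: a sequence in $G_2$ with a constant unit coordinate can perfectly well converge to an element of $pG$. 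The identity $G_2\cap pG=pG_2$ is true but does not help, because you have no way to place the limit in $G_2$. So the statement you are aiming for --- that \emph{every} pair from your family satisfies (b)--(d) --- is not available; the same problem already affects your verification of $p^\ell y_\eta\neq 0$ and $p^\ell(y_\eta-y_\nu)\neq 0$ for $\ell<t$, which you pass over more quickly.

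The paper's resolution is a genuinely different and essential ingredient. It verifies the coordinate information only for pairs $A,B\subseteq\omega$ with $A=B-\{n\}$, where $y_B-y_A$ equals the \emph{finite} sum $x_{i_n}-x_{j_n}$ exactly, so no interchange of limit and projection is needed. It then defines the equivalence relation $A_1EA_2\Leftrightarrow\exists x\,(y_{A_1}-y_{A_2}=px)$, notes that $A\mapsto y_A$ is continuous so $E$ is analytic, that $E$ separates any two sets differing in one element, and invokes Lemma 13 of \cite{sh_for_CH} to conclude $|\mathcal{P}(\omega)/E|=2^{\omega}$; the desired family is a set of representatives of distinct classes (the relations $p^\ell(y_{A}-y_{B})=0$ for $\ell<t$ are handled the same way, with $y_\emptyset=0$ taking care of clause (b)). In short, the conclusion holds for a selected subfamily of size $2^{\aleph_0}$, not for all pairs, and the selection rests on a descriptive-set-theoretic counting theorem for analytic equivalence relations that your outline is missing.
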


	\begin{proof} 
By induction on $n < \omega$, choose $(i_n, j_n)$ such that:
	\begin{enumerate}[(i)]
	\item if $m < n$, then $j_m < i_n$;
	\item $i_n < j_n < \omega_1$;
	\item $\mathfrak{d}(x_{i_n}, x_{j_n}) < 2^{-2^n}$.
	\end{enumerate}
	For $A \subseteq \omega$ and $n < \omega$, let:
	$$y_{A, n} = \sum \{ x_{i_k} - x_{j_k} : k \in A \text{, } k < n\}.$$
Then for every $A \subseteq \omega$, $(y_{A, n})_{n < \omega}$ is Cauchy. Let $y_A \in G$ be its limit. Then by continuity we have:
\begin{enumerate}[(a)]
	\item $p^{t} y_A = 0$, and, for $\ell < t$, $p^{\ell} y_A \neq 0$;
	\item if $A \neq B \subseteq \omega$, then $y_A$ and $y_B$ commute, $p^{t} (y_{A} - y_{B}) = 0$ and, for $\ell < t$, $p^{\ell} (y_{A} - y_{B}) \neq 0$.
\end{enumerate}
	We define the following equivalence relation $E$ on $\mathcal{P}(\omega)$: 
	$$A_1EA_2 \; \Leftrightarrow \; \exists x \in G(y_{A_1} - y_{A_2} = px).$$
We then have:
\begin{enumerate}[(I)]
	\item $E$ is analytic;
	\item if $B \subseteq \omega$, $n \in B$ and $A = B - \{ n \}$, then $\neg(y_A E y_B)$;
	\item by \cite[Lemma 13]{sh_for_CH} we have $|\mathcal{P}(\omega)/E| = 2^{\omega}$.
\end{enumerate}
	Hence, we can find $( y_{\alpha} : \alpha < 2^{\aleph_0})$ as wanted.
\end{proof}

\begin{observation}\label{lemma5} Let $G = (G, \mathfrak{d})$ be an uncountable Polish group, $p$ a prime and $1 \leq t < \omega$. Suppose that $G = G_0 \oplus G_1 \oplus G_2$, with $G_0$ countable, $G_1$ abelian, $\lambda > \aleph_0$ and $G_2 = \bigoplus_{\alpha < \lambda} \mathbb{Z}_{p^t}x_{\alpha}$. Let $(y_{\alpha} : \alpha < 2^{\aleph_0})$ be as in Lemma \ref{lemma4} with respect to the decomposition $G'_1 \oplus G'_2$ for $G'_1 = G_0 \oplus G_1$ and $G'_2 = G_2$. Then there is a pure embedding of $H = \bigoplus_{\alpha < 2^{\aleph_0}} \mathbb{Z}_{p^t} y_{\alpha}$ into the abelian group $G_1 \oplus G_2$.
\end{observation}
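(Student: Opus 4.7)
My plan is to build the pure embedding in three steps: use pigeonhole to translate the $y_\alpha$ into $G_1 \oplus G_2$, extract a subfamily satisfying an $\mathbb{F}_p$-linear independence condition, and then verify injectivity and purity of the resulting map.

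By property (b) of Lemma \ref{lemma4} each $y_\alpha$ is $p^t$-torsion, so its $G_0$-component lies in $G_0[p^t]$, a countable set; pigeonhole yields a subfamily of size $2^{\aleph_0}$ on which $\pi_{G_0}(y_\alpha)=a^*$ is constant. Setting $z_\alpha := y_\alpha - a^* \in K := G_1 \oplus G_2$, we have $p^t z_\alpha = 0$ and the differences $z_\alpha - z_\beta = y_\alpha - y_\beta$ inherit properties (c) and (d) of Lemma \ref{lemma4} inside $K$. A short pigeonhole argument (at most $t$ exceptions for the order, and at most one for $z_\alpha \in pK$, since otherwise some pairwise difference would contradict (c) or (d)) allows us to assume that every $z_\alpha$ has order exactly $p^t$ and $z_\alpha \notin pK$. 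The elements $(p^{t-1}z_\alpha)$ are then $2^{\aleph_0}$ pairwise distinct nonzero vectors in the $\mathbb{F}_p$-vector space $K[p]$; since any $\mathbb{F}_p$-vector space containing $2^{\aleph_0}$ distinct elements has dimension at least $2^{\aleph_0}$, a transfinite-recursion extraction of a maximal linearly independent subset of $\{p^{t-1}z_\alpha\}$ produces a further subfamily $(z_\alpha)_{\alpha<2^{\aleph_0}}$ on which $(p^{t-1}z_\alpha)$ is $\mathbb{F}_p$-linearly independent in $K[p]$.

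Define $\phi : H \to K$ by $y_\alpha \mapsto z_\alpha$; this is a well-defined group homomorphism since $p^t z_\alpha = 0$ and $K$ is abelian. Injectivity follows from the independence of $(p^{t-1}z_\alpha)$: in a relation $\sum k_i z_{\alpha_i}=0$ with $0 \leq k_i < p^t$, multiplying by $p^{t-1}$ and applying the independence in $K[p]$ yields $p \mid k_i$; iterating by multiplying the reduced equation by $p^{t-2}, p^{t-3}, \ldots$ successively forces $p^\ell \mid k_i$ for every $\ell \leq t$, whence $k_i = 0$.

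The main obstacle is purity. Given $h = \sum k_i z_{\alpha_i} = p^s g$ with $g \in K$, one must deduce $p^s \mid k_i$. Multiplying the equation $h = p^s g$ by $p^{t-s}$ yields $\sum(k_i p^{t-s})\, z_{\alpha_i} = p^t g$, and since $G_2$ is $p^t$-torsion we have $p^t g \in p^t G_1 \subseteq G_1$; further multiplying by $p^{s-1}$ and reducing modulo higher powers produces an equation in $K[p]$ of the form $\sum \bar m_i\,(p^{t-1}z_{\alpha_i}) = x$ with $\bar m_i$ a unit mod $p$ for each $i$ with $v_p(k_i) < s$ and $x \in G_1[p]$. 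The linear independence of $(p^{t-1}z_\alpha)$ in $K[p]$, combined with a descent argument that handles the $G_1[p]$-contribution using the decomposition $K = G_1 \oplus G_2$, then forces all such $\bar m_i = 0$, contradicting the assumption $v_p(k_i) < s$ and completing the proof of purity.
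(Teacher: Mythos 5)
Your first two paragraphs are essentially sound: the pigeonhole on the countable set of possible $G_0$-components, the removal of the finitely many indices violating the order or $p$-divisibility conditions, the extraction of a continuum-sized subfamily with $(p^{t-1}z_\alpha)$ linearly independent in $K[p]$ (writing $K=G_1\oplus G_2$ as you do), and the resulting injectivity of $\phi$ all check out. The genuine gap is exactly where you announce ``the main obstacle'': the invariants you have secured do not imply purity, so the unspecified ``descent argument that handles the $G_1[p]$-contribution'' cannot be filled in from what you have. For a homogeneous subgroup $H=\bigoplus_\alpha\mathbb{Z}_{p^t}z_\alpha$ with $\phi$ injective, purity in $K$ is equivalent to $p^{t-1}H\cap p^tK=0$, i.e.\ to $\mathbb{F}_p$-independence of the $p^{t-1}z_\alpha$ \emph{modulo} $K[p]\cap p^tK$ --- strictly stronger than independence in $K[p]$, and already failing for a single generator. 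Concretely, let $t\geq 2$ and suppose $G_1$ has $\mathbb{Z}_p u\oplus\mathbb{Z}_{p^{2t}}w$ as a direct summand; set $z=px_1+u+p^tw$. Then $z$ has order exactly $p^t$, $z\notin pK$, and $p^{t-1}z$ is a nonzero (hence linearly independent) element of $K[p]$, yet $p^{t-1}z=p^{2t-1}w\in p^tK$, so $\langle z\rangle$ is already not pure in $K$. Nothing in conditions (a)--(d) of Lemma \ref{lemma4} --- which constrain only the individual $y_\alpha$ and their pairwise differences, not their $p^{t-1}$-multiples modulo $p^tK$ nor longer linear combinations --- rules out such a $z_\alpha$ surviving all of your extractions.

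What is needed is control over \emph{all} finite combinations with a unit coefficient, and that is precisely the content of the paper's proof, which your pairwise pigeonhole does not replicate. The paper performs a transfinite selection: it keeps $\alpha$ only if $y_\alpha-\xi$ is not divisible by $p$ in $G_1\oplus G_2$ and has order exactly $p^t$ for \emph{every} $\xi$ in the $\mathbb{Z}_{p^t}$-span of the previously indexed $y_\beta$ (the sets $\mathcal{U}_1,\mathcal{U}_2$), and then shows via Fodor's lemma --- pressing down on the witnesses $(\xi_\alpha,\ell_\alpha)$ and using Lemma \ref{lemma4}(c),(d) on the resulting constant stationary set --- that $2^{\aleph_0}$ indices survive. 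Since there are $2^{\aleph_0}$ finite combinations to exclude, no counting argument on individual elements or pairs can substitute for this recursion; to complete your proof you would have to import essentially that argument, which is the heart of the paper's Observation \ref{lemma5}.
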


	\begin{proof} Define:
	$$\mathcal{U}_1 = \{ \alpha < 2^{\aleph_0} : \text{ for no } \xi \in \bigoplus_{\beta < \alpha} \mathbb{Z}_{p^t} y_{\beta} \text{ we have } y_{\alpha} - \xi \text{ is divisible by } p \text{ in } G_1 \oplus G_2\},$$
	$$\mathcal{U}_2 = \{ \alpha < 2^{\aleph_0} : \text{ for no } \xi \in \bigoplus_{\beta < \alpha} \mathbb{Z}_{p^t} y_{\beta} \text{ and } \ell < t \text{ we have } p^\ell(y_{\alpha} - \xi) = 0 \}.$$
Let $\mathcal{U} = \mathcal{U}_1 \cap \mathcal{U}_2$. For $\alpha \notin \mathcal{U}$, let $(\xi_{\alpha}, \ell_{\alpha})$ be witnesses of $\alpha \notin \mathcal{U}$, with $\ell = t$ if $\alpha \notin \mathcal{U}_1$.

\begin{claim} $|\mathcal{U}| = 2^{\aleph_0}$.
\end{claim}

\begin{claimproof} Suppose that $|\mathcal{U}| < 2^{\aleph_0}$ and let $\mu = \aleph_0 + |\mathcal{U}|$. Hence $\mathcal{U} \cap \mu^+$ is bounded. Let $\alpha_* = sup(\mathcal{U} \cap \mu^+)$. By Fodor's lemma for some stationary set $S \subseteq \mu^+ - (\alpha_* + 1)$ we have $\alpha \in S$ implies $(\xi_{\alpha}, \ell_{\alpha}) = (\xi_{*}, \ell_{*})$. Let $\alpha_1 < \alpha_2 \in S$. Then if $\ell_* = t$ we have that $y_{\alpha_2} - y_{\alpha_1}$ is divisible by $p$ in $G_1 \oplus G_2$, and if $\ell_* < t$ we have that $p^\ell(y_{\alpha_2} - y_{\alpha_1}) = 0$. In both cases we reach a contradiction, and so $|\mathcal{U}| = 2^{\aleph_0}$.
\end{claimproof}

\noindent Let now $\mathbf{p}_{\ell}$ be the canonical projection of $G$ onto $G_{\ell}$ ($\ell = 1, 2$). Then, by the claim above, $\{ (\mathbf{p}_{1} + \mathbf{p}_{2}) (y_\alpha): \alpha \in \mathcal{U} \}$ is a basis of a pure subgroup of $G_1 \oplus G_2$ isomorphic to $H$, and so we are done.
\end{proof}

\section{Positive Side}

	In this section we prove the the sufficiency of conditions (a)-(d) of Theorem \ref{main_th}.

	\begin{lemma}\label{positive} Suppose that $G = G(\Gamma, \mathfrak{p})$ satisfies conditions (a)-(d) of Theorem \ref{main_th} and $|\Gamma| = 2^{\omega}$. Then $G$ is realizable as the group of automorphisms of a countable structure.
\end{lemma}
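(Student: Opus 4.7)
My plan is to decompose $G$ as a product of a countable group and finitely many large abelian factors of a particularly rigid form, realize each factor as the automorphism group of a countable structure with the appropriate Polish topology, and combine them via a disjoint-union construction.

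Using condition (a), fix a countable $A \subseteq \Gamma$ such that each $b \in \Gamma - A$ is adjacent to every other vertex. Then every generator in $\Gamma - A$ is central, so reading the presentation directly yields
\[
G \;\cong\; G(\Gamma_A, \mathfrak{p}\!\restriction\!A) \,\times\, \bigoplus_{b \in \Gamma - A} C_{\mathfrak{p}(b)}.
\]
Conditions (b) and (c) single out finitely many colors $c_1 = p_1^{n_1}, \ldots, c_k = p_k^{n_k}$ whose vertex sets are uncountable, and by (d) each such set has cardinality $2^{\aleph_0}$. Because the set of possible colors is itself countable, the vertices in $\Gamma - A$ of all remaining colors (i.e.\ $\infty$ or a color appearing at most countably many times) form a countable set $B_0$. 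Enlarging $A$ to $A' := A \cup B_0$ (still countable) and absorbing $\bigoplus_{B_0} C_{\mathfrak{p}(b)}$ into the countable factor yields
\[
G \;\cong\; G_1 \,\oplus\, \bigoplus_{i=1}^k G_2^{(i)}, \qquad G_2^{(i)} \;=\; \bigoplus_{2^{\aleph_0}} C_{p_i^{n_i}},
\]
with $G_1 := G(\Gamma_{A'},\mathfrak{p}\!\restriction\!A')$ countable.

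For the countable factor $G_1$, I realize it as $\mathrm{Aut}(M_0)$ carrying the discrete topology: let $M_0$ have universe $G_1$ with one unary function symbol $L_g$ for left multiplication by each $g \in G_1$; automorphisms commute with all left multiplications, hence are right multiplications, giving an abstract-group isomorphism $\mathrm{Aut}(M_0) \cong G_1$, and the stabilizer of any single element of $G_1$ is already trivial, so the pointwise-convergence topology is discrete. For each abelian factor $G_2^{(i)}$, the key ingredient is the abstract group isomorphism
\[
\bigoplus_{\alpha < 2^{\aleph_0}} C_{p^n} \;\cong\; \prod_{j < \omega} C_{p^n},
\]
obtained by applying Pr\"ufer's theorem (bounded abelian $p$-groups split as direct sums of cyclic $p$-groups) to $\prod_\omega C_{p^n}$: one checks that the socle $\prod_\omega C_p$ lies entirely inside $p^{n-1}\!\left(\prod_\omega C_{p^n}\right)$, so every cyclic summand has full order $p^n$, and the Erd\H{o}s--Kaplansky theorem gives $\dim_{\mathbb{F}_p}\prod_\omega C_p = 2^{\aleph_0}$ many of them. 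Hence $G_2^{(i)}$ is realized as $\mathrm{Aut}(M_i)$, where $M_i$ is the disjoint union, indexed by $\omega$, of directed $p_i^{n_i}$-cycles, each tagged by its own unary predicate; automorphisms preserve each cycle and act on it as an element of $C_{p_i^{n_i}}$, so $\mathrm{Aut}(M_i) \cong \prod_\omega C_{p_i^{n_i}}$ with the usual compact Polish product topology.

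Finally, I set $M := M_0 \sqcup M_1 \sqcup \cdots \sqcup M_k$ in a multi-sorted language (or with disjoint unary predicates marking the pieces), so every automorphism of $M$ splits as a product of automorphisms of the $M_j$ and
\[
\mathrm{Aut}(M) \;\cong\; \mathrm{Aut}(M_0) \times \prod_{i=1}^k \mathrm{Aut}(M_i) \;\cong\; G
\]
as abstract groups, with the pointwise-convergence topology on $\mathrm{Aut}(M)$ equal to the product of the discrete topology on $G_1$ with compact Polish topologies on the $G_2^{(i)}$'s, and hence Polish. I expect the main substantive step to be the abstract group isomorphism $\bigoplus_{2^{\aleph_0}} C_{p^n} \cong \prod_\omega C_{p^n}$, which rests on Pr\"ufer's classification and an invariant-counting argument; the decomposition via (a)--(d) and the disjoint-union combination are essentially bookkeeping once this identification is in hand.
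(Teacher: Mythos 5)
Your proposal is correct and follows essentially the same route as the paper: decompose $G$ via conditions (a)--(d) into a countable factor plus finitely many groups of the form $\bigoplus_{2^{\aleph_0}} \mathbb{Z}_{p^n}$, identify each of the latter with $\prod_{\omega}\mathbb{Z}_{p^n}$, and realize that product as the automorphism group of a tagged disjoint union of countably many copies of a structure with automorphism group $\mathbb{Z}_{p^n}$. You merely supply details the paper leaves implicit (the Pr\"ufer/Erd\H{o}s--Kaplansky justification of $\bigoplus_{2^{\aleph_0}} C_{p^n}\cong\prod_{\omega}C_{p^n}$, and the unary predicates preventing automorphisms from permuting the copies), and these details check out.
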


	\begin{proof} Let $G = G(\Gamma, \mathfrak{p})$ be as in the assumptions of the theorem. Then we have:
	$$G \cong H \oplus \bigoplus_{p^n \mid n_*} \bigoplus_{\alpha < \lambda_{(p, n)}} \mathbb{Z}_{p^n},$$
for some countable group $H$, natural number $n_* < \omega$, and $\lambda_{(p, n)} \in \{ 0, 2^{\aleph_0} \}$ (here we are crucially using conditions (a)-(d) of the statement of the theorem, of course). Since finite sums of groups realizable as groups of automorphisms of countable structures are realizable as groups of automorphisms of countable structures, it suffices to show that for given $p^n$ the group:
$$H_1 = \bigoplus_{\alpha < 2^{\aleph_0}} \mathbb{Z}_{p^n} \cong \mathbb{Z}_{p^n}^{\omega}$$ is realizable as the group of automorphisms of countable structure. To this extent, let $A$ be a countable first-order structure such that $Aut(A) = \mathbb{Z}_{p^n}$. Let $B$ be the disjoint union of $\aleph_0$ copies of $A$, then $\mathbb{Z}_{p^n}^{\omega} \cong Aut(B)$, and so we are done.
\end{proof}

\end{document}